\documentclass[a4paper,12pt]{article}
\title{Torsion points and height jumping in higher-dimensional families of abelian varieties}

\usepackage{amsmath, amssymb, mathrsfs, amsthm, geometry, mathtools, graphicx}
\usepackage{hyperref}
\usepackage[all]{xy}

\usepackage{enumitem}

\usepackage{cleveref}
\let\oref\ref
\AtBeginDocument{\renewcommand{\ref}[1]{\cref{#1}}}

\newcommand{\on}[1]{\operatorname{#1}}
\newcommand{\bb}[1]{{\mathbb{#1}}}
\newcommand{\cl}[1]{{\mathscr{#1}}}
\newcommand{\ca}[1]{{\mathcal{#1}}}
\newcommand{\bd}[1]{{\mathbf{#1}}}


\newcommand{\Span}[1]{\left<#1\right>}

\newcommand{\abs}[1]{\lvert#1\rvert}
\newcommand{\aabs}[1]{\lvert\lvert#1\rvert\rvert}



\newcommand{\ra}{\rightarrow}

\newcommand{\hra}{\hookrightarrow}
\newcommand{\sub}{\subseteq}

\theoremstyle{definition}
\newtheorem{definition}{Definition}[section]
\newtheorem{conjecture}[definition]{Conjecture}

\theoremstyle{plain}
\newtheorem{proposition}[definition]{Proposition}
\newtheorem{lemma}[definition]{Lemma}
\newtheorem{theorem}[definition]{Theorem}
\newtheorem{corollary}[definition]{Corollary}

\theoremstyle{remark}

\newtheorem{remark}[definition]{Remark}


\renewcommand{\phi}{\varphi}

\author{David Holmes}
\date{\today}




\newcounter{nootje}
\setcounter{nootje}{1}
\renewcommand\check[1]{[*\thenootje]\marginpar{\tiny\begin{minipage}
{20mm}\begin{flushleft}\thenootje : 
#1\end{flushleft}\end{minipage}}\addtocounter{nootje}{1}}

\newcommand{\beq}{\begin{equation}}
\newcommand{\eeq}{\end{equation}}
\newcommand{\beqs}{\begin{equation*}}
\newcommand{\eeqs}{\end{equation*}}

\begin{document}
\maketitle
\begin{abstract} 
In 1983 Silverman and Tate showed that the set of points in a 1-dimensional family of abelian varieties where a section of infinite order has `small height' is finite. We conjecture a generalisation to higher-dimensional families, where we replace `finite' by `not Zariski dense'. We show that this conjecture would imply the Uniform Boundedness Conjecture for torsion points on abelian varieties. We then prove a few special cases of this new conjecture. 
\end{abstract}
\newcommand{\tor}{\bd{T}}
\tableofcontents

\section{Introduction}

\newcommand{\qq}{K}
\newcommand{\cseq}{\mathfrak{K}}

The uniform boundedness conjecture predicts that the number of rational torsion points on an abelian variety over $\bb{Q}$ is bounded uniformly in the dimension of the abelian variety. It is a theorem of Mazur for elliptic curves (with generalisations to number fields by Kamienny, Merel and others), but is wide open for higher dimensional abelian varieties. 

An easy consequence of this conjecture is that, given a family of abelian varieties and a section of infinite order, the set of rational points in the base where the section becomes torsion is not Zariski dense. This is known unconditionally for families of elliptic curves by Mazur's theorem, but not in general. However, it is also known unconditionally for abelian varieties of any dimension if the base of the family has dimension 1 - this is a consequence of a theorem of Silverman \cite{Silverman1983Heights-and-the} and Tate \cite{Tate1983Variation-of-th} (with refinements by a number of authors - \cite{Lang1983Fundamentals-of}, \cite{Call1986Local-heights-o}, \cite{Green1989Heights-in-fami}, \cite{David-Holmes2014Neron-models-an}
).

The first main result of this paper is that the uniform boundedness conjecture is in fact \emph{equivalent} to showing that the set of points where a section of infinite order becomes torsion is not Zariski dense (\ref{thm:main_equivalence}). In particular, if we could generalise the theorem of Silverman and Tate to families of arbitrary dimension (cf. \ref{conjecture_ST}), this would imply uniform boundedness. The proof of the equivalence is not difficult, it mainly uses induction on dimension and the fact that the stack of principally polarised abelian varieties of fixed dimension is a noetherian Deligne-Mumford stack. 

By a recent result of Cadoret and Tamagawa, the uniform boundedness conjecture for abelian varieties is equivalent to the same conjecture for jacobians of curves. Using this, we can also show that the restriction of \ref{conjecture_ST} to jacobians implies the full uniform boundedness conjecture. 

In fact Silverman and Tate (\cite{Silverman1983Heights-and-the}, \cite{Tate1983Variation-of-th})show not only the finiteness of the number of points where the section becomes torsion, but (stronger) the finiteness of the set of points where the section has bounded N\'eron-Tate height. Motivated by this, we wonder whether the same may be true for families of higher dimension. Writing $\hat{\on{h}}$ for the N\'eron-Tate height, we define
\begin{definition}
Given a variety $S/K$, an abelian scheme $A/S$, and a section $\sigma \in A(S)$, we say $(A/S, \sigma)$ has \emph{sparse small points} if for all $d \in \bb{Z}_{\ge 1}$ there exists $\epsilon >0$ such that the set
\begin{equation*}
{\tor}_\epsilon(d) = \{s\in S(\bar{\qq}) | [\kappa(s): \qq] \le d \text{ and }\hat{\on{h}}(\sigma(s)) \le \epsilon\}
\end{equation*}
is not Zariski dense in $S$.  
\end{definition}
We tentatively propose
\begin{conjecture}\label{conj:sparse_small_points}\label{conjecture_SSP}
Every pair $(A/S, \sigma)$ with $\sigma$ of infinite order has sparse small points. 
\end{conjecture}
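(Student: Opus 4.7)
The plan is to proceed by induction on $\dim S$, with the base case $\dim S = 1$ being precisely the theorem of Silverman--Tate already cited in the introduction. Before starting the induction I would reduce to a convenient setup: enlarging $K$ if necessary, assume $S$ is smooth and geometrically integral, fix a smooth projective compactification $\bar{S}$ of $S$, assume $A/S$ is principally polarised, and arrange that $A$ extends to a semiabelian scheme over $\bar{S}$ (Faltings--Chai after an alteration).

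For the inductive step, assume the conclusion for all pairs whose base has dimension strictly less than $\dim S$, and suppose for contradiction that $\tor_\epsilon(d)$ is Zariski dense in $S$ for every $\epsilon > 0$. I would slice $\bar{S}$ by a sufficiently positive Lefschetz pencil $\{H_t\}_{t \in \bb{P}^1}$ of hyperplane sections. For very general $t$ the section $\sigma|_{H_t\cap S}$ remains of infinite order, so the inductive hypothesis furnishes some $\epsilon_t > 0$ making $\tor_{\epsilon_t}(d) \cap H_t$ not Zariski dense in $H_t$. If one could pick $\epsilon_t$ \emph{independently of} $t$, the density of $\tor_\epsilon(d)$ in $S$ would be contradicted by sweeping $t$ over $\bb{P}^1$. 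Thus the whole problem reduces to a uniform-in-$t$ version of Silverman--Tate along the pencil.

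To get such uniformity, I would try to realise $\hat{\on{h}} \circ \sigma$ as a height associated to a line bundle on $\bar{S}$. Using N\'eron models of $A/S$ together with the comparison-of-heights results of Silverman, Tate, Call--Green and \cite{David-Holmes2014Neron-models-an}, one expects to produce a line bundle $M$ on $\bar{S}$ for which
\beqs
\hat{\on{h}}(\sigma(s)) = h_M(s) + O(1) \qquad \text{for all } s \in S(\bar{K}).
\eeqs
If $M$ is big on $\bar{S}$, then Northcott applied to points of bounded degree gives that $\{s : h_M(s) \le \epsilon,\ [\kappa(s):K] \le d\}$ is not Zariski dense for $\epsilon$ small enough depending on $d$, and we are done.

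The hard part will be proving that $M$ is big. Bigness is a positivity statement expressing that $\sigma$ genuinely moves with the variation of the family: it should fail exactly when, after decomposing $A$ up to isogeny, the component of $\sigma$ in the non-isotrivial part of $A/S$ is itself constant along some divisor of $S$. I would therefore first split off the isotrivial part of $A$ (a Lang--N\'eron-type statement for the $K(S)/K$-trace), reduce to the non-isotrivial case, and then try to show that the Betti form attached to $\sigma$ is semi-positive and of full rank on a dense open of $S$; this non-degeneracy translates into bigness of $M$. Establishing it in full generality is essentially a relative Bogomolov-type height inequality, and is in my view substantially deeper than everything else in the argument; consistent with the paper proving only special cases, I would expect the method above to succeed unconditionally only when the Betti map of $\sigma$ can be checked to be non-degenerate by direct geometric means.
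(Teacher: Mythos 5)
You are proposing a strategy for a statement that the paper does not prove: \ref{conj:sparse_small_points} is explicitly ``tentatively proposed'' as a conjecture, and the paper only establishes special cases (\ref{lem:ST_for_EC} conditionally for elliptic curves, \ref{thm:main_sparsity} under strong geometric and N\'eron-model hypotheses). So the right comparison is between your strategy and the paper's strategy for those special cases, and there are two places where your sketch departs from or undershoots the paper's analysis.

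First, the step you treat as the central goal --- producing a line bundle $M$ on $\bar{S}$ with $\hat{\on{h}}(\sigma(s)) = h_M(s) + O(1)$ --- is precisely the step the paper identifies as \emph{false} in general. The paper's decomposition is $\hat{\on{h}}_\sigma = \on{h}_{\cl{L}} + j$ with $\cl{L}$ the admissible (Lear) extension of the Deligne pairing, and the footnote in the introduction states plainly that the height jump $j = \sum_p j_p$ has each local term unbounded on $\bb{Q}_p$-points; that is, the $O(1)$ you want is not $O(1)$. The entire content of \ref{sec:gen_aligned_bound_j} (culminating in \ref{thm:generic_alignment_bounds_jump}) is to show that $j$ becomes bounded \emph{under the extra hypothesis} that $J_K$ admits a N\'eron model over $S_K$ (equivalently, generic alignment of the family). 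So you have not avoided the paper's key difficulty; you have implicitly assumed it away. Relatedly, the paper does not aim for bigness of $\cl{L}$: it proves only that $\cl{L}$ is ample-positive (via Lang--N\'eron positivity of the N\'eron--Tate form modulo the trace), and then imposes a geometric condition on $S$ --- e.g.\ $\dim_\bb{Q}\on{Pic}(S)\otimes\bb{Q}=1$ --- to upgrade ample-positivity to $h^0(\cl{L}^{\otimes n})\ge 2$, which by \ref{lem:very_effective_implies_weakly_non_degen} is enough. Bigness is a genuinely stronger property and does not follow from the argument given; your Betti-form/relative-Bogomolov route is not in the paper at all.

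Second, the Lefschetz pencil induction has a gap independent of the uniformity-in-$t$ problem you flag. Even granting a uniform $\epsilon$, non-density of $\tor_\epsilon(d)\cap H_t$ in $H_t$ for very general $t$ does not contradict density of $\tor_\epsilon(d)$ in $S$: a Zariski-dense subset of $S$ can perfectly well be a countable union of proper subvarieties (as $\tor_\epsilon(d)$ a priori is), and such a set meets a very general member of a pencil in a non-dense subset. So ``sweeping $t$'' does not conclude. Consistent with this, the paper does not induct on $\dim S$ to attack \ref{conjecture_SSP}; the only induction on dimension in the paper is in \ref{lem:fundamental_induction}, which serves the equivalence \ref{thm:main_equivalence} with uniform boundedness, a different (and easier) statement.
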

If $S$ has dimension 1 Silverman and Tate actually show this to be true for every $\epsilon$, not just sufficiently small $\epsilon$, but we are hesitant to conjecture this in general. Since torsion points have height zero the above conjecture would imply that sets of points where a section of infinite order becomes torsion are not Zariski dense, and hence (by \ref{thm:main_equivalence} mentioned above) the uniform boundedness conjecture. The conjecture is true for families of elliptic curves, subject to a conjecture of Lang on lower bounds for heights in families (\ref{lem:ST_for_EC}). 

In the second half of this paper we prove some special cases of \ref{conj:sparse_small_points}. In \cite{Holmes2016Quasi-compactne} we proved that torsion points are sparse for families of jacobians which admit N\'eron models. Here we consider again families admitting N\'eron models, but the proof is far more involved, and we are forced to impose some additional assumptions in order for our methods to work, see \ref{thm:main_sparsity}. Most important is a geometric condition on the  base $S$; a precise statement can be found in \ref{thm:main_sparsity}, but here we note that it is satisfied whenever $\on{dim}_\bb{Q} \on{Pic} (S) \otimes_\bb{Z} \bb{Q}  = 1$, yielding a slight simplification of \ref{thm:main_sparsity}:
\begin{theorem}
Let $S/\bb{Q}$ be a projective variety with $\on{Pic}(S) \times_\bb{Z} \bb{Q} \cong \bb{Q}$, and let $U \sub S$ be a dense open subscheme. Let $C/S$ be a family of nodal curves, smooth over $U$, with $C$ regular. Write $J$ for the jacobian of $C_U/U$, and let $\sigma \in J(S)$ be a section of infinite order corresponding to the restriction to $C_U$ of a divisor on $C$ supported on some sections of $C/S$. Assume that $S$ has a proper regular model over $\bb{Z}$ over which $C$ has a proper nodal model. Assume that $J$ admits a N\'eron model over $S$. Then $(J/U, \sigma)$ has sparse small points. 
%
\end{theorem}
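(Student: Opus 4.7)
The plan is to reduce the statement to Northcott's theorem for a suitable ample height on $S$. Concretely, the goal is to produce a line bundle $M \in \on{Pic}(S) \otimes \bb{Q}$ together with an inequality of the shape $\hat{\on{h}}(\sigma(s)) \ge h_M(s) - O(1)$, where $h_M$ is any Weil height attached to $M$. If $M$ is ample then the hypothesis $\on{Pic}(S)\otimes \bb{Q} \cong \bb{Q}$ combined with Northcott's theorem immediately bounds $\tor_\epsilon(d)$ inside a proper closed subset of $S$ for any $\epsilon>0$ and any $d$, which is stronger than what the definition of sparse small points demands.

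To construct $M$ I would work on the integral models. Using the proper regular $\cl{S}/\bb{Z}$ and the proper nodal $\cl{C}/\cl{S}$ supplied by the hypotheses, one extends the sections $P_i\colon S \to C$ supporting $D$ to horizontal divisors on $\cl{C}$ (modifying $\cl{S}$ birationally if necessary), and so extends $D$ to a divisor $\bar D$ on $\cl{C}$. The N\'eron--Tate height of $\sigma(s)$ is then computed via arithmetic intersection theory on $\cl{C}$: given the assumed N\'eron model of $J$, one obtains a decomposition
\[
\hat{\on{h}}(\sigma(s)) = h_M(s) + j(s) + O(1),
\]
in which $M$ is built from $\pi_{*}(\bar D \cdot \bar D)$ corrected by a vertical divisor prescribed by the N\'eron model (in the spirit of \cite{David-Holmes2014Neron-models-an} and \cite{Holmes2016Quasi-compactne}), and $j(s)$ is a height-jumping contribution supported on $S \setminus U$. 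The role of the N\'eron model is precisely to guarantee $j(s) \ge 0$: at a degenerate fibre the correction measures a self-pairing of a component-group element, which is automatically non-negative once the biextension is trivialised along the N\'eron model. Under the Picard-rank-one hypothesis any non-zero class in $\on{Pic}(S) \otimes \bb{Q}$ is a rational multiple of the ample generator, so it suffices to show $M \ne 0$; non-triviality of $M$ should in turn follow from the infinite order of $\sigma$ via Arakelov positivity on $\cl{C}$, since a numerically trivial $M$ would force $\bar D$ to differ from a fibral divisor by a torsion class, contradicting infinite order.

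The main obstacle I expect is the implication ``$\sigma$ has infinite order $\Rightarrow M \ne 0$''. This is precisely the step that typically fails beyond Picard rank one (it is essentially the algebro-geometric source of the height-jumping pathologies advertised in the title), and even in our setting one must genuinely use both the polarisation on $J$ and the presence of the N\'eron model. A secondary technical issue is verifying $j(s) \ge 0$ and controlling the implicit $O(1)$ uniformly in $s$; this should be manageable with admissible metrics on the nodal model $\cl{C}$, but requires care near the discriminant locus where the N\'eron model of $J$ and the relative Picard scheme of $\cl{C}$ disagree.
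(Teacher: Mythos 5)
Your overall architecture is right in outline — decompose the N\'eron--Tate height as a Weil height with respect to some $\bb{Q}$-line bundle on $S$ plus an error term, show the bundle has positive degree, and control the error — and this is indeed what the paper does. But the role you assign to the N\'eron model is wrong, and your sign conventions for the height jump lead you to a fatal gap.

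You write $\hat{\on{h}}(\sigma(s)) = h_M(s) + j(s) + O(1)$ and claim that the N\'eron model forces $j(s)\ge 0$, whence $\hat{\on{h}}(\sigma(s))\ge h_M(s)-O(1)$. In the paper's conventions the decomposition is $\hat{\on{h}}_\sigma = \on{h}^\ca{P}_{\ca{L}} - j$ with $j\ge 0$, i.e.\ the nonnegativity of the jump (a theorem of \cite{David-Holmes2014Neron-models-an}, already true without any N\'eron-model hypothesis) gives $\hat{\on{h}}_\sigma \le \on{h}^\ca{P}_{\ca{L}}$, which is exactly the wrong direction for Northcott; the paper explicitly flags this in the remark after the proof of \ref{thm:main_sparsity}. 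To get a useful lower bound on $\hat{\on{h}}_\sigma$ one needs an \emph{upper} bound on the jump, not its nonnegativity. This is precisely what the N\'eron model over $S_K$ buys and what \ref{thm:generic_alignment_bounds_jump} proves: the jump over the generic fibre vanishes by \ref{cor:NM_implies_no_jump}, and after that one must still bound the jump at the finitely many bad places of $\Lambda$ using the Green's-function formula (\ref{formula_jump_green}) and the resistive-network estimates of \cite[\S6]{David-Holmes2014Neron-models-an}. That boundedness result is the technical heart of the proof, and your proposal contains no substitute for it; instead you appeal to a positivity statement that is already known, does not use the N\'eron model, and does not give the inequality you need.

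A second, less severe, misattribution: you say the implication ``$\sigma$ of infinite order $\Rightarrow M\ne 0$'' is ``precisely the step that typically fails beyond Picard rank one'' and the source of height-jumping. In fact the paper proves $\ca{L}$ is ample-positive in all Picard ranks via Lang--N\'eron (after the reduction to $\sigma$ not lying in the $L/K$-trace, a step you omit but which is not optional); what can fail in higher Picard rank is that ample-positive $\ne$ effective/semi-ample, which is why the paper replaces ampleness by the weaker ``weakly non-degenerate'' notion in \ref{sec:heights_and_rat} and only needs $h^0(\ca{L}^{\otimes n})\ge 2$. Height-jumping is an orthogonal issue about the singularity of the Poincar\'e metric at the boundary, handled via \cite{Ignacio-Burgos-Gil2015The-singulariti} and the alignment hypothesis in Step 3 of the proof, which your sketch also skips.
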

For the sake of those readers not familiar with the theory of N\'eron models over higher dimensional bases developed in \cite{Holmes2014Neron-models-an} we note that the assumption that $J$ admit a N\'eron model over $S$ can be replaced by the assumption that $S\setminus U$ (with reduced induced scheme structure) is smooth over $\bb{Q}$, or that the fibres of $C \ra S$ have tree-like dual graphs.

We conclude the introduction by giving an outline of the proof of \ref{thm:main_sparsity}. In \cite{David-Holmes2014Neron-models-an} we gave a new proof of the theorem of Silverman and Tate over curves, and in this article we follow essentially the same strategy of proof in higher dimensions. Namely, given a family of jacobians $J/S$ and a section $\sigma \in J(S)$ of infinite order, we consider the function 
\begin{equation*}
\hat{\on{h}}_\sigma\colon S(\bar{\bb{Q}}) \ra \bb{R}_{\ge 0}; \;\; s \mapsto \hat{\on{h}}(\sigma(s)). 
\end{equation*}
We decompose $\hat{\on{h}}_\sigma$ as a sum of two functions, 
\begin{equation*}
\hat{\on{h}}_\sigma = \on{h}_\cl{L} + j
\end{equation*}
where $\on{h}_\cl{L}$ is a Weil height on $S$ with respect to a certain line bundle, and $j$ is an `error term', called the \emph{height jump}. When $S$ is a curve, we were able to show that the bundle $\cl{L}$ is ample and the function $j$ is bounded, from which the theorem of Silverman and Tate easily follows. 

Unfortunately, in extending from curves to arbitrary varieties we encounter two substantial technical difficulties; the line bundle $\cl{L}$ is not in general ample, and the function $j$ does not seem to be bounded\footnote{More precisely, we can write $j= \sum_p j_p$ as a sum of local terms indexed by primes of $\bb{Q}$, and each $j_p$ is unbounded on $\bb{Q}_p$-valued points. We may hope that for $\bb{Q}$-valued points all these local jumps are bounded - indeed for torsion points on elliptic curves this should follow from Mazur's theorem - but we cannot prove this at present. }. The assumptions in \ref{thm:main_sparsity} allow us to get around these problems. We will show that the jump is bounded if the jacobian $J$ has a N\'eron model over a suitable compactification of $S$, and that $\cl{L}$ is ample if $S$ has sufficiently simple geometry (cf. our condition that $\on{dim}_\bb{Q} \on{Pic}(\bar{S})\otimes_\bb{Z} \bb{Q} = 1$). 

Many thanks to Owen Biesel, Maarten Derickx, Wojciech Gajda, Ariyan Javanpeykar, Robin de Jong and Pierre Parent for helpful comments and discussions. Particular thanks to Owen and Robin for allowing me to include the material in \ref{sec:aligned_jump_vanishes} after we removed it from our joint paper \cite{David-Holmes2014Neron-models-an} to avoid references to then-unpublished results. 

%
%

\subsection{Conventions}
Given a field $k$, by a \emph{variety over $k$} we mean an integral separated $k$-scheme of finite type. We fix throughout a field $\qq$, which is either the field of rational numbers or the field $\bb{F}_q(T)$ for some prime power $q$. We fix an algebraic closure $\bar{\qq}$ of $\qq$. We write $\kappa(p)$ for the residue field of a point $p$.

\section{Reformulating the uniform boundedness conjecture}

We begin by recalling the uniform boundedness conjecture:
\begin{conjecture}[The strong uniform boundedness conjecture for abelian varieties]\label{conj:SUB}
Fix an integer $g \ge 0$. There exists a constant $B = B(g) \in \bb{Z}_{>0}$ such that for every $g$-dimensional principally polarised abelian variety $A/\qq$ and point $p \in A(\qq)$, we have that either $p$ is of infinite order, or that the order of $p$ is less than $B$. 
\end{conjecture}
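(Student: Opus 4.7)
This is a famously open problem, so a realistic proof proposal has to reduce it to a statement that we have some hope of proving rather than attack it directly. The plan is to use the strategy advertised by the paper itself: reformulate \ref{conj:SUB} as a non-density statement in families, and then attack that statement via heights.

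For the reduction step I would exploit the fact that the moduli stack $\ca{A}_g$ of principally polarised $g$-dimensional abelian varieties is a noetherian Deligne--Mumford stack. If \ref{conj:SUB} fails then for every $B$ there is a pair $(A,p)$ with $p\in A(\qq)$ of finite order exceeding $B$. After passing to a suitable finite level cover of $\ca{A}_g$ one represents such pairs as closed points of a finite-type $\qq$-scheme; noetherian induction on the Zariski closure of the resulting set then yields an irreducible subscheme $T$ and, up to a further finite cover, a section $\sigma$ of the restricted universal abelian scheme whose torsion locus is Zariski dense in $T$. This is essentially the content of \ref{thm:main_equivalence}, so it suffices to rule out such Zariski-dense torsion loci in arbitrary families.

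For the height step I would try to prove the stronger \ref{conj:sparse_small_points} for $(\ca{A}_T\to T,\sigma)$; since torsion points have N\'eron--Tate height zero, sparsity of small points certainly implies non-density of the torsion locus. Following the outline at the end of the introduction, one writes $\hat{\on{h}}_\sigma=\on{h}_{\cl{L}}+j$ on a suitable compactification of $T$, with $\cl{L}$ a line bundle on that compactification and $j$ the height jump, and then combines ampleness (or at least bigness) of $\cl{L}$ with boundedness of $j$ to invoke a Northcott-type finiteness on points of bounded degree and small height.

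The main obstacle, as the author emphasises, is that both of these properties genuinely fail on an arbitrary base: the line bundle $\cl{L}$ need not be ample, and the local jumps $j_p$ are unbounded on $\bb{Q}_p$-points. The paper's \ref{thm:main_sparsity} handles the case of jacobians admitting N\'eron models over bases $S$ with $\on{Pic}(S)\otimes_{\bb{Z}}\bb{Q}\cong\bb{Q}$, and the Cadoret--Tamagawa reduction lets one in principle restrict attention to jacobians; but extending the ampleness and jump-boundedness arguments to moduli of high Picard rank, and in particular to $\ca{A}_g$ (or a suitable $\ca{M}_g$-style cover), is exactly where the honest difficulty sits. In other words, my plan terminates at \ref{conj:sparse_small_points}, whose resolution in full generality is the real open problem behind \ref{conj:SUB}.
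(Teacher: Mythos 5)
The statement you were handed is labelled \emph{Conjecture} in the paper, not Theorem; the paper offers no proof of \ref{conj:SUB}, and your proposal quite rightly does not claim one either. What you have written is an accurate account of the paper's own \emph{reduction} strategy: \ref{thm:main_equivalence} shows \ref{conj:SUB} is equivalent to \ref{conjecture_ST} (the reduction step via a finite generically \'etale scheme cover of the noetherian Deligne--Mumford stack $\ca{A}_{g,1}$ together with the induction on dimension in \ref{lem:fundamental_induction} is exactly what you sketch), and \ref{conj:sparse_small_points} would imply \ref{conjecture_ST} via the decomposition $\hat{\on{h}}_\sigma = \on{h}_{\cl{L}} + j$. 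You also correctly identify the two genuine obstructions --- non-ampleness of $\cl{L}$ and unboundedness of the local jumps --- and that \ref{thm:main_sparsity} only handles a restricted situation. So there is no gap to point out beyond the one you already flag: the plan terminates, as it must, at the open \ref{conj:sparse_small_points}, and the paper stops in the same place.
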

Note that this is equivalent to the usual formulation of the strong uniform boundedness conjecture (see for example \cite[conjecture 2.3.2]{Silverberg2001Open-questions-}); Zarhin's trick reduces the general case to that of principally polarised abelian varieties, by Weil restriction we can obtain a bound for abelian varieties over finite extensions of $\qq$ which is uniform in the dimension of the variety and the degree of the field, and finally a bound on the order of torsion points and on the dimension implies a bound on the size of the torsion subgroup. 

Our first main result (\ref{thm:main_equivalence}) is that \ref{conj:SUB} is equivalent to the following conjecture:
\begin{conjecture}\label{conjecture_ST}
Let $S/\qq$ be a variety and let $A/S$ be an abelian scheme. Let $ d \ge 1$ be an integer. Let $\sigma \in A(S)$ be a section of infinite order. Define
\begin{equation*}
{\tor}(d) = \{s\in S(\bar{\qq}) | [\kappa(s):\qq] \le d \text{ and }\sigma(s) \text{ is torsion in }A_s(\bar{\qq})\}. 
\end{equation*}
Then $\tor(d)$ is not Zariski dense in $S$.  
\end{conjecture}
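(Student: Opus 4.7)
The plan is to follow the height-theoretic strategy outlined in the introduction and reduce Conjecture \ref{conjecture_ST} to an application of Northcott's theorem. Since the N\'eron-Tate height vanishes on torsion, ${\tor}(d) \subseteq \{s \in S(\bar\qq) : [\kappa(s):\qq] \le d,\ \hat{\on{h}}(\sigma(s))=0\}$, so it suffices to handle the $\epsilon = 0$ case of Conjecture \ref{conjecture_SSP}. The target is therefore to show that the vanishing locus of $\hat{\on{h}}_\sigma$ on points of bounded degree is not Zariski dense.

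First I would choose a proper model $\bar S \supseteq S$ and, after perhaps passing to an alteration, extend $A \to S$ to an object over $\bar S$ admitting a N\'eron model in the sense of \cite{Holmes2014Neron-models-an}. Pulling the universal biextension back along $\sigma$ produces a line bundle $\cl{L}$ on $\bar S$, together with a canonical decomposition $\hat{\on{h}}_\sigma = \on{h}_\cl{L} + j$ where $j$ is the height-jump of the introduction. On the torsion locus one has $\hat{\on{h}}_\sigma = 0$, hence $\on{h}_\cl{L} = -j$; if $\cl{L}$ were ample and $j$ were bounded below on $\tor(d)$, then Northcott would force $\tor(d)$ to be finite, which is stronger than Zariski non-density.

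Carrying this out splits into two subproblems. The first is to show that $\cl{L}$ is ample, or at least that the small-height locus for $\on{h}_\cl{L}$ is not Zariski dense. When $\on{Pic}(\bar S) \otimes_\bb{Z} \bb{Q} = \bb{Q}$ this reduces to verifying that $\cl{L}$ is not numerically trivial, which one expects to follow from $\sigma$ having infinite order combined with a Ceresa-type positivity statement for the pulled-back biextension; in general one would need a genuine positivity input, perhaps via a Moriwaki-style arithmetic height on $\bar S$. The second, and more serious, subproblem is to bound the height-jump $j = \sum_p j_p$. The paper's own footnote already signals that each local $j_p$ is unbounded on $\qq_p$-valued points, so any global bound must arise from cancellation between places, and by the equivalence \ref{thm:main_equivalence} such cancellation is essentially as hard to establish as the uniform boundedness conjecture itself.

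The main obstacle is therefore step two: controlling the height-jump in the absence of a global N\'eron model. I expect a complete proof of Conjecture \ref{conjecture_ST} in full generality cannot avoid this issue, and that every incremental advance will come from geometric hypotheses on $(S,A)$ that force $j$ to be bounded on $\bar\qq$-points, such as small Picard rank of $\bar S$, existence of a proper N\'eron model, or tree-like dual graphs of degenerate fibres. Accordingly I would first attempt the conjecture in the case where $A \to S$ extends to a semi-abelian scheme over a smooth compactification of $S$, in which case $j$ vanishes identically and only the ampleness of $\cl{L}$ remains; and then try to enlarge the class of admissible compactifications using the techniques of \ref{thm:main_sparsity}, rather than attack the general case head-on.
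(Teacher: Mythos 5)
The statement you were asked to prove is \ref{conjecture_ST}, which the paper does not prove: it is one of its two central conjectures, and the paper's main structural result (\ref{thm:main_equivalence}) is precisely that it is \emph{equivalent} to the strong uniform boundedness conjecture \ref{conj:SUB}. There is therefore no proof in the paper to compare your attempt against, and a complete argument along the lines you sketch would resolve a major open problem. You clearly recognise this — you explicitly note that bounding the jump in general is ``essentially as hard to establish as the uniform boundedness conjecture itself'' — so your proposal is correctly calibrated as a strategy for special cases rather than a proof, and that strategy is in substance the one the paper follows for \ref{thm:main_sparsity}: reduce to the $\epsilon=0$ case of \ref{conjecture_SSP}, decompose $\hat{\on{h}}_\sigma$ into a Weil height for the admissible pairing $\cl{L}=\Span{\sigma,\sigma}_a$ plus the height jump $j$, establish positivity of $\cl{L}$ via the Lang--N\'eron theorem, and control $j$ using N\'eron models.

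Two clarifications on how the paper actually executes the two subproblems you isolate. First, even under the hypothesis $\dim_{\bb{Q}}\on{Pic}(\bar S)\otimes\bb{Q}=1$ the paper does not prove $\cl{L}$ is ample; it proves only that $\cl{L}$ is ample-positive, deduces $h^0(\bar S,\cl{L}^{\otimes n})\ge 2$ for some $n$, and then uses \ref{lem:rational_map_height_inequality} and \ref{lem:very_effective_implies_weakly_non_degen} to get weak non-degeneracy of $\on{h}_{\cl{L}}$. This yields non-Zariski-density of the small-height locus but not finiteness, so the conclusion is genuinely weaker than the Northcott finiteness you invoke. Second, be careful with your claim that a semi-abelian (or N\'eron-model) extension over a smooth compactification of $S$ makes $j$ vanish identically: \ref{cor:NM_implies_no_jump} requires the N\'eron model over the relevant base itself, whereas a N\'eron model over $\bar S_K$ only — which is what generic alignment gives — yields boundedness of $j$ via \ref{thm:generic_alignment_bounds_jump}, not vanishing, because of the finitely many primes of $\Lambda$ where alignment can fail on the integral model. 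Boundedness suffices for the argument, but the distinction matters if you try to enlarge the class of admissible families.
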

In the case when the base scheme $S$ has dimension 1, this conjecture follows from a theorem of Silverman \cite{Silverman1983Heights-and-the}, see also \cite{Tate1983Variation-of-th}, \cite{Lang1983Fundamentals-of}, \cite{Call1986Local-heights-o}, \cite{Green1989Heights-in-fami}, \cite{David-Holmes2014Neron-models-an} for various strengthened versions. In \ref{sec:small_points} we will discuss a variant which looks not only at torsion points but at all points of `small height'. 


\begin{theorem}\label{thm:main_equivalence}
\Cref{conj:SUB} is equivalent to \ref{conjecture_ST}. 
\end{theorem}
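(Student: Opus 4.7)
The plan is to prove both implications. For \ref{conj:SUB} $\Rightarrow$ \ref{conjecture_ST}, one starts from SUB; a combination of Weil restriction (to pass from $\qq$-points to points of residue field degree at most $d$) and Zarhin's trick (to pass from principally polarised fibres to arbitrary abelian schemes $A/S$) yields a uniform bound $B = B(g,d)$ on the order of $\sigma(s)$ for all $s \in \tor(d)$, where $g$ is the relative dimension of $A/S$. Thus $\tor(d) \subseteq \bigcup_{n=1}^{B}\ker(n\sigma)$, where $\ker(n\sigma) \subseteq S$ is the closed subscheme defined as the pullback of the zero section along $n\sigma \colon S \to A$. Since $\sigma$ has infinite order, each $\ker(n\sigma)$ is a proper closed subscheme of the integral variety $S$, and a finite union of such is not Zariski dense.

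For the reverse direction \ref{conjecture_ST} $\Rightarrow$ \ref{conj:SUB}, I argue by contradiction. Suppose SUB fails in some dimension $g \geq 1$: there exist PPAVs $A_i/\qq$ of dimension $g$ together with torsion points $p_i \in A_i(\qq)$ of unbounded order. Fix $\ell \geq 3$ and let $U$ be the moduli variety $\ca A_{g,\ell}$ of PPAVs of dimension $g$ with full level-$\ell$ structure (a smooth quasi-projective $\qq$-variety, whose existence reflects the fact that $\ca A_g$ is a noetherian Deligne--Mumford stack), and let $X \to U$ be the universal PPAV. Each pair $(A_i, p_i)$ acquires a level-$\ell$ structure after passing to the extension $\qq(A_i[\ell])/\qq$ of degree at most $c := |\operatorname{Sp}_{2g}(\bb F_\ell)|$, and hence corresponds to a $\bar\qq$-point $q_i \in X(\bar\qq)$ with $[\kappa(q_i):\qq] \leq c$.

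Next, let $Y \subseteq X$ be the Zariski closure of $\{q_i\}$; by passing to an irreducible component I may assume $Y$ is a variety in which an infinite subsequence of the $q_i$ is Zariski dense. The closure has positive dimension because infinitely many of the $q_i$ have distinct images in $|X|$: each closed point of $X$ supports only finitely many $\bar\qq$-points of bounded residue degree, and the torsion order of $q_i$ in its fibre depends only on the underlying closed point, so the unboundedness of $\operatorname{ord}(p_i)$ forces the images to vary. Consider the pulled-back abelian scheme $A_Y := X \times_U Y \to Y$ together with the tautological section $\tau \colon Y \to A_Y$ sending $y \mapsto (y, y)$, where $y \in Y \subseteq X$ is regarded as a point of the fibre of $X \to U$ over its image in $U$. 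Since $\tau(q_i) = q_i$ has torsion order $\operatorname{ord}(p_i) \to \infty$, no fixed $n$ annihilates all $\tau(q_i)$, so $\tau$ has infinite order. But $\{q_i\} \subseteq \tor(c)$ for $(A_Y/Y, \tau)$ is Zariski dense in $Y$, contradicting \ref{conjecture_ST}.

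The main technical delicacy is the rigidification step — passing from the stack $\ca A_g$ to a scheme cover via level structure and tracking the associated degree bound — together with verifying that the closure $Y$ is positive-dimensional so that the density of $\{q_i\}$ in $Y$ is meaningful. The induction on dimension alluded to in the introduction presumably appears if one wishes to organise the argument inductively (say, by descending dimension of $Y$ or by induction on $g$), or to handle degenerate cases such as $Y$ failing to dominate $U$, but the one-shot construction above already yields the needed contradiction.
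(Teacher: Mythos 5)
Your proof is correct, and the core idea — reduce to the universal family of PPAVs with a marked point (scheme-theoretically rigidified) and apply \ref{conjecture_ST} there, using the tautological diagonal section — is the same as the paper's. But the organisation is genuinely different and worth contrasting.

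The paper proves \ref{lem:fundamental_induction} directly: assuming \ref{conjecture_ST}, for any $A/S$ with a section, the order of torsion at points of degree $\le d$ is uniformly bounded, argued by induction on $\dim S$. It then applies this lemma to the pullback of $\cl{A}_{g,2}\to\cl{A}_{g,1}$ along a finite, surjective, generically \'etale scheme cover $S\to\cl{A}_{g,1}$ obtained from \cite[theorem 16.6]{Laumon2000Champs-algebriq}. Your argument instead proceeds by contradiction: if SUB fails, you take the Zariski closure $Y\subseteq X$ of the counterexamples and feed $(X\times_U Y/Y,\tau)$ directly to \ref{conjecture_ST}. Your ``one-shot'' claim that $\{q_i\}\cap Y$ is dense in some positive-dimensional irreducible component $Y$ of the closure is correct (in fact, every irreducible component of $\overline{\{q_i\}}$ has this property, by a short argument), and this replaces the paper's explicit induction. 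You also replace the general DM-stack-cover theorem by the concrete cover $\ca A_{g,\ell}\to\ca A_g$ coming from level structures. This is more elementary but slightly less flexible: you should require $\ell$ coprime to $\operatorname{char}\qq$ for $\ca A_{g,\ell}$ to be a scheme, and the degree bound should be $\lvert\operatorname{GSp}_{2g}(\bb F_\ell)\rvert$ rather than $\lvert\operatorname{Sp}_{2g}(\bb F_\ell)\rvert$, since the Galois image lands in $\operatorname{GSp}$ via the Weil pairing and the cyclotomic character.

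The trade-off between the two routes: the paper's \ref{lem:fundamental_induction} is a clean reusable statement (``ST implies a uniform bound over any fixed family''), and the Laumon--Moret-Bailly cover works uniformly in all characteristics without fuss; your version avoids stating the lemma and gives a more explicit construction, at the cost of the contradiction framing and the minor bookkeeping about level structure. For the easy direction (SUB $\Rightarrow$ ST), the paper offers no proof, so your Weil-restriction/Zarhin's-trick argument with $\tor(d)\subseteq\bigcup_{n\le B}\ker(n\sigma)$ is a welcome addition and correct as stated, provided one takes reduced structure on $S$ (the paper's variety convention) so that a finite union of proper closed subschemes is indeed non-dense.
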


It is easy to see that \ref{conj:SUB} implies \ref{conjecture_ST}; we must show the converse. Before giving the proof we need a lemma and a definition. 

\begin{lemma}\label{lem:fundamental_induction}
Assume \ref{conjecture_ST}. Let $S/\qq$ be a variety, and let $A/S$ be an abelian scheme with a section $\sigma \in A(S)$. Fix $d \in \bb{Z}_{>0}$. Then there exists an integer $c=  c(d)>0$ such that for every $L/\qq$ of degree at most $d$ and every point  $s \in S(L)$, either $\sigma(s)$ has infinite order, or $\sigma(s)$ is torsion of order less than $c$. 
\end{lemma}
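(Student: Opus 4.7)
My plan is to argue by induction on $\dim S$, using Conjecture~\ref{conjecture_ST} at each step to cut the base down to a proper closed subvariety.

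In the base case $\dim S = 0$, the scheme $S$ is the spectrum of a finite extension of $\qq$ and $\sigma$ is a single point, so one can simply take $c$ to be one more than the order of $\sigma$ (or $c = 1$ if $\sigma$ has infinite order). For the inductive step, I would first dispose of the trivial situation where $\sigma$ itself is torsion of some order $N$ as a section of $A/S$, in which case $c = N+1$ is visibly sufficient. When $\sigma$ has infinite order, applying Conjecture~\ref{conjecture_ST} to $(A/S, \sigma, d)$ yields a proper closed subvariety $Z \subsetneq S$ whose set of $\bar{\qq}$-points contains $\tor(d)$. I would decompose $Z_{\mathrm{red}}$ into its finitely many irreducible components $Z_1, \ldots, Z_n$, invoke the inductive hypothesis on each triple $(A|_{Z_i}/Z_i, \sigma|_{Z_i}, d)$ to obtain constants $c_i(d)$, and finally take $c = \max_i c_i$.

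To justify that this $c$ works, the key observation is that any $s \in S(L)$ with $[L:\qq] \le d$ has scheme-theoretic image a closed point $p \in S$ with $[\kappa(p):\qq] \le d$, and $s$ itself factors through $\mathrm{Spec}(\kappa(p))$. If $\sigma(s)$ is torsion, then choosing an embedding $\kappa(p) \hookrightarrow \bar{\qq}$ exhibits $p$ as a point of $\tor(d)$, so $p \in Z_i$ for some $i$; the point $s$ is therefore really an $L$-point of $Z_i$, to which the bound $c_i$ applies.

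I do not anticipate a substantive obstacle: the argument is essentially book-keeping once Conjecture~\ref{conjecture_ST} has been granted. The only small subtlety is verifying that uniformity of the bound across all extensions $L/\qq$ of degree $\le d$ follows from quantifying only over residue fields of closed points of $S$, which is precisely what the noetherian induction delivers.
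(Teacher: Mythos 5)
Your proof is correct and follows essentially the same route as the paper: induction on $\dim S$, dispatching the base case and the case where $\sigma$ itself is torsion, then applying \ref{conjecture_ST} to obtain $Z$ and invoking the inductive hypothesis on the (finitely many, lower-dimensional) irreducible components. The only difference is that you spell out the bookkeeping (taking $c = \max_i c_i$ and tracing a torsion $L$-point to a component $Z_i$) which the paper leaves implicit.
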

\begin{proof}
We proceed by induction on the dimension of $S$. If $\dim S = 0$ then $S$ has only finitely many $\bar{\qq}$-points, so the result is immediate. 

In general, we fix an integer $\delta >0$ and assume the lemma holds for every variety $S$ of dimension less than $\delta$. Now let $S$ have dimension $\delta$. If $\sigma$ is torsion (say of order $c_0$) then for every $s \in S(\overline{\qq})$ it holds that $c_0\sigma(s) = 0$, and we are done. As such, we may and do assume that $\sigma$ has infinite order. We apply \ref{conjecture_ST} to obtain a proper closed subscheme $Z \sub S$ such that 
$$ \{p\in S(L) | [L:\qq]\le d \text{ and }\sigma(p) \text{ is torsion in }A_p(L)\} \sub Z(\overline{\qq}). $$
Now $Z$ has only finitely many irreducible components, and each has dimension less than $\delta$. We are done by the induction hypothesis. 
\end{proof}

\begin{definition}
Fix an integer $g>0$. Let $\cl{A}_{g}$ denote the moduli stack of PPAV of dimension $g$. Then $\cl{A}_g$ is a separated Deligne-Mumford stack of finite type over $\bb{Z}$. Given also an integer $n\ge 0$, let $\cl{A}_{g,n}$ denote the moduli stack of PPAV of dimension $g$ together with a collection of $n$ ordered marked sections (not assumed distinct). 

We have natural maps $\phi_n\colon \cl{A}_{g,n+1} \ra \cl{A}_{g,n}$ given by forgetting the last section. The map $\phi_n$ has $n$ natural sections $\tau_i$, given by `doubling up' the sections $\sigma_i$. Then $(\phi_n\colon \cl{A}_{g,n+1} \ra \cl{A}_{g,n}, \tau_1, \cdots, \tau_n)$ is the `universal PPAV with $n$ marked sections'. In particular, each $\cl{A}_{g,n}$ is a Deligne-Mumford stack, separated and of finite type over $\bb{Z}$. 
\end{definition}

\begin{proof}[Proof of \ref{thm:main_equivalence}]
Consider the universal map $\phi_{2}\colon \cl{A}_{g,2} \ra \cl{A}_{g,1}$ with its tautological section $\sigma_1$. The basic idea is to apply \ref{lem:fundamental_induction} to this family, but we must be careful since $\cl{A}_{g,1}$ is a stack and not a scheme. However, since $\cl{A}_{g,1}$ is noetherian we can apply \cite[theorem 16.6]{Laumon2000Champs-algebriq} (every noetherian Deligne-Mumford stack admits a finite, surjective and generically \'etale morphism from a scheme) to construct a (separated) scheme $S$ of finite type over $K$, a map $S \ra \cl{A}_{g,1}$ and an integer $d \ge 1$ such that for every field-valued point $s\colon \on{Spec} L \ra \cl{A}_{g,1}$ there is an extension $M/L$ of degree at most $d$ and an $M$-valued point of $S$ lying over $s$. We are then done by \ref{lem:fundamental_induction} applied to the pullback of $\cl{A}_{g,2}$ and $\sigma_1$ to (the underlying reduced subscheme of each irreducible component of) $S$. 
\end{proof}

\subsection{Extensions and generalisations}

\begin{remark}
Using a recent result of Cadoret and Tamagawa \cite{Cadoret2013Note-on-torsion}, we can reduce further to the case of families of curves:
\begin{conjecture}\label{conjecture_curves}
Let $S/\qq$ be a variety and let $C/S$ be a proper smooth curve with jacobian $J/S$. Let $ d \ge 1$ be an integer. Let $\sigma \in J(S)$ be a section of infinite order. Define
\begin{equation*}
{\tor}(d) = \{s\in S(\bar{\qq}) | [\kappa(s):\qq] \le d \text{ and }\sigma(s) \text{ is torsion in }J_s(\bar{\qq})\}. 
\end{equation*}
Then $\tor(d)$ is not Zariski dense in $S$.  
\end{conjecture}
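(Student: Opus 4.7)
The goal is to show that Conjecture \ref{conjecture_curves} implies the strong uniform boundedness conjecture \ref{conj:SUB} (and hence, by Theorem \ref{thm:main_equivalence}, also Conjecture \ref{conjecture_ST}). My plan is to combine the reduction of Cadoret--Tamagawa with the same moduli-cover-plus-induction strategy used in the proof of \ref{thm:main_equivalence}.

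First I would invoke \cite{Cadoret2013Note-on-torsion}: the uniform boundedness conjecture for principally polarised abelian varieties of dimension $g$ follows from uniform boundedness for jacobians of smooth projective curves of genus at most $g' = g'(g)$. This reduces the task to producing, for each fixed $g$, a constant $B(g)$ bounding the order of any torsion point on any jacobian of a smooth projective $\qq$-curve of genus at most $g$.

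Next I would replay the argument in the proof of \ref{thm:main_equivalence}, replacing $\cl{A}_{g,n}$ with the moduli stack $\cl{M}_{g,n}$ of smooth projective curves of genus $g$ with $n$ ordered marked sections. On $\cl{M}_{g,2}$, carrying the universal curve $\cl{C}$ with marked sections $P_1, P_2$, the divisor $[P_1]-[P_2]$ defines a section $\sigma$ of the relative jacobian $\cl{J} \to \cl{M}_{g,2}$. Since $\cl{M}_{g,2}$ is a separated Deligne--Mumford stack of finite type over $\bb{Z}$, I can again apply \cite[Theorem 16.6]{Laumon2000Champs-algebriq} to obtain a scheme $S$ of finite type over $\qq$, a finite surjective generically \'etale map $S \to \cl{M}_{g,2}$, and an integer $d \ge 1$ controlling residue-field degrees of points of $\cl{M}_{g,2}$ versus points of $S$. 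Pulling $(\cl{J}, \sigma)$ back to the reduced underlying scheme of each irreducible component of $S$ and applying Conjecture \ref{conjecture_curves} together with the dimension induction of \ref{lem:fundamental_induction}, one obtains the desired uniform bound $B(g)$ on torsion orders.

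The main subtlety is ensuring that $\sigma = [P_1]-[P_2]$ has infinite order on the generic fibre of $\cl{J} \to \cl{M}_{g,2}$, so that \ref{lem:fundamental_induction} (and hence Conjecture \ref{conjecture_curves}) genuinely applies after pullback to $S$. For $g \ge 2$ this is standard: the generic jacobian is simple with endomorphism ring $\bb{Z}$, the Abel--Jacobi map is a closed immersion, and so $[P_1]-[P_2]$ is torsion only on a proper closed substack. The genus $0$ case is vacuous, while genus $1$ is already settled by Mazur--Merel, so these low-genus contributions may be absorbed into the final constant. Beyond this the argument is a clean parallel of the proof of \ref{thm:main_equivalence}; the only new input is the Cadoret--Tamagawa reduction itself.
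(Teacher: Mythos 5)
Your overall route --- first using Cadoret--Tamagawa to reduce \ref{conj:SUB} to torsion on jacobians, then rerunning the moduli-cover-plus-induction argument from the proof of \ref{thm:main_equivalence} --- is exactly the one the paper has in mind (the paper only sketches this, citing \cite{Cadoret2013Note-on-torsion} and declaring the rest ``almost identical''). However, your choice of universal family contains a genuine gap. Working over $\ca{M}_{g,2}$ with the section $\sigma = [P_1 - P_2]$ only parametrises torsion classes lying in the image of $C\times C \ra J_C$, $(P,Q)\mapsto [P-Q]$, i.e.\ in the surface $W_1 - W_1 \sub J_C$. For $g\ge 3$ this is a proper closed subvariety of $J_C$, whereas the torsion points produced by the Cadoret--Tamagawa reduction are arbitrary points of $J_C$: they arise as images of torsion points of $A$ under an embedding $A\hra J_C$ coming from a Lefschetz-type curve in $A$, and land in a $\dim A$-dimensional abelian subvariety with no reason to lie in $W_1-W_1$. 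So your argument only bounds the orders of torsion points of the special form $[P-Q]$, which does not yield \ref{conj:SUB}.

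The fix is to take more marked points. Since the difference map $\on{Sym}^m C\times \on{Sym}^m C \ra \on{Pic}^0(C)$ is surjective as soon as $2m\ge g$, every degree-zero class on a genus-$G$ curve is geometrically of the form $[\textstyle\sum_{i\le m}P_i - \sum_{i>m}P_i]$ with $m=\lceil G/2\rceil$; one should therefore run the argument on $\ca{M}_{G,2\lceil G/2\rceil}$ with this tautological section --- this is precisely why the paper's subsequent remark records the base dimension as $\on{dim}\overline{\ca{M}}_{G,2\lceil G/2\rceil}$. (Equivalently, one can work directly with the universal jacobian over $\ca{M}_G$ and its diagonal section, the literal analogue of $\cl{A}_{g,2}\ra\cl{A}_{g,1}$.) With the universal jacobian the bounded-degree lifting of field-valued points is immediate from \cite[theorem 16.6]{Laumon2000Champs-algebriq} as in the proof of \ref{thm:main_equivalence}; with the marked-points version one additionally notes that the fibres of $\ca{M}_{G,2m}\ra$ (universal jacobian) are nonempty of finite type, hence acquire points over extensions of uniformly bounded degree. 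The remaining points of your proposal (generic infinitude of the order of the tautological section, absorbing low genera, and the induction of \ref{lem:fundamental_induction} restated with \ref{conjecture_curves} rather than \ref{conjecture_ST} as input, which works because restricting a smooth proper curve to a closed subvariety of the base stays in that class) then go through.
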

The equivalence of \ref{conj:SUB} with \ref{conjecture_curves} may be proven in an almost identical fashion to the equivalence of \ref{conj:SUB} with \ref{conjecture_ST}, after first appealing to the main result of \cite{Cadoret2013Note-on-torsion} to reduce \ref{conj:SUB} to the case of curves. We omit the details. 
\end{remark}

\begin{remark}
It is possible to simultaneously `specialise' both conjectures to again obtain equivalent statements; for example:
\begin{itemize}
\item
\ref{conj:SUB} for elliptic curves is equivalent to \ref{conjecture_curves} for elliptic curves over base schemes of dimension at most $2 = \on{dim}\overline{\ca{M}}_{1,2}$;
\item 
\ref{conj:SUB} for abelian surfaces is equivalent to \ref{conjecture_curves} for genus 2 curves over base schemes of dimension at most $5 = \on{dim}\overline{\ca{M}}_{2,2}$;
\item 
\ref{conj:SUB} for abelian varieties of dimension $g$ is equivalent to \ref{conjecture_curves} for curves of genus $G \coloneqq 1+6^g(g-1)!\frac{g(g-1)}{2}$ over base schemes of dimension at most $\on{dim}\overline{\ca{M}}_{G, 2\lceil\frac{G}{2}\rceil}$ (see \cite[theorem 1.2]{Cadoret2013Note-on-torsion} for the origin of this expression for $G$). 
\end{itemize}
The proofs are identical and the possible variations numerous, so we do not give further details. One can also reduce further to the case of families of curves admitting compactifications with at-worst nodal singularities, etc. 

\end{remark}

\begin{remark}
Let $A/S$ be an abelian scheme, $\sigma$ a section of infinite order, and $f\colon S' \ra S$ an alteration (a proper surjective generically finite map of varieties). Then \ref{conjecture_ST} holds for $\sigma$ in $A/S$ if and only if it holds for $f^*\sigma$ in $A\times_S S'/S'$. A corresponding statement holds for \ref{conjecture_curves}. This has several convenient corollaries; for example, it is enough to prove \ref{conjecture_curves} for families of curves $C/S$ admitting stable models over some compactification of $S$. In \ref{sec:special_cases} we will study \ref{conjecture_curves} in detail in the case of such families. 
\end{remark}

\section{Conjecture on sparsity of small points}\label{sec:special_cases}
\label{sec:small_points}

In the introduction we proposed (\ref{conj:sparse_small_points}) that the sets of points where a section of infinite order has small height is not Zariski dense. It is clear that this conjecture implies \ref{conjecture_ST}. The remainder of this paper will be devoted to proving special cases of \ref{conj:sparse_small_points}, first for elliptic curves and then for certain special families of abelian varieties of higher dimension (\ref{thm:main_sparsity}). 

\subsection{\Cref{conjecture_SSP} for elliptic curves}
To suggest that \ref{conjecture_SSP} is not completely unreasonable, we show that a conjecture of Lang implies \ref{conjecture_SSP} for elliptic curves and $d=1$. Moreover, this conjecture of Lang is in fact a theorem (see \cite{Hindry1988The-canonical-h}) over global function fields, yielding an unconditional proof of \ref{conjecture_SSP} for elliptic curves and $d=1$ over global function fields. 

We begin by recalling Lang's conjecture:
\begin{conjecture}(Lang, \cite{Lang1978Elliptic-curves} or \cite[conjecture F.3.4 (a)]{Hindry2000Diophantine-geo}). \label{conj:lang}
Fix a global field $k/K$. There exists a constant $c = c(k)>0$ such that for all elliptic curves $E/k$ and all non-torsion points $a \in E(k)$, we have
\begin{equation*}
\hat{\on{h}}(a) \ge c\cdot \log N_{k/\qq}\Delta_{E/k}.
\end{equation*}
Here $\Delta_{E/k}$ is the discriminant, and $N_{k/\qq}$ denotes the norm down $K$. 
\end{conjecture}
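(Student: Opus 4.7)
The plan is to approach Lang's conjecture by decomposing the Néron--Tate height into local contributions and obtaining lower bounds on each contribution in terms of the corresponding local factor of the discriminant. Throughout I would work with a minimal Weierstrass model of $E$ over the ring of integers of $k$ (or the smooth projective model of the function field $k/\qq$ in the function field case); such a model exists after a controlled field extension, which affects the constant $c$ but not the overall shape of the inequality.

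The first step is to write
\begin{equation*}
\hat{\on{h}}(a) = \sum_v \lambda_v(a),
\end{equation*}
where $v$ ranges over the places of $k$ and $\lambda_v$ is the Néron local height of $a$ at $v$. At places of good reduction, $\lambda_v(a)$ is non-negative and controlled by the $v$-adic distance from $a$ to the zero section, but does not directly detect the discriminant. At places of multiplicative reduction of Kodaira type $I_{n_v}$, the local height of $a$ agrees up to a bounded error with an explicit expression proportional to $\frac{i_v(n_v-i_v)}{n_v}\log\abs{\pi_v}^{-1}$, where $i_v \in \{0,\ldots,n_v-1\}$ indexes the component of the Néron model to which $a$ specialises. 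Summing $n_v\log\abs{\pi_v}^{-1}$ over all bad places recovers $\log N_{k/\qq}\Delta_{E/k}$ up to an additive constant absorbing the contributions from places of additive reduction.

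In the function field case $k=\bb{F}_q(C)$, I would follow the geometric argument of Hindry, thereby recovering \cite{Hindry1988The-canonical-h}. The elliptic curve $E/k$ extends to a minimal elliptic surface $\pi\colon \cl{E} \to C$, and a non-torsion point $a \in E(k)$ corresponds to a section $\sigma_a$ of $\pi$. Shioda's height pairing formula rewrites $\hat{\on{h}}(a)$ as an intersection number of $\sigma_a$ with itself (after subtracting the contribution of the zero section and the fibre components met by $\sigma_a$), while $\log N_{k/\qq}\Delta_{E/k}$ is proportional to $\deg \pi_*\omega_{\cl{E}/C}$, hence to $\chi(\cl{O}_{\cl{E}})$, via Noether's formula. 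The required inequality then follows from the positive-definiteness of the Mordell--Weil intersection pairing together with a uniform bound on the correction terms coming from singular fibres, both depending only on the geometry of $C$.

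The number field case is where I expect the genuine difficulty, and this part of Lang's conjecture is in fact open, so the present plan is largely speculative. The main obstacle is the case of integral points, where $a$ specialises to the identity component of the Néron model at every finite place, so that all non-archimedean local contributions vanish and $\hat{\on{h}}(a)$ is controlled entirely by its archimedean parts. To force $\log N_{k/\qq}\Delta_{E/k}$ to appear on the right-hand side, I would try to bound $\lambda_v(a)$ from below at archimedean $v$ using theta-function expansions of the local height, then relate such bounds to the Faltings height of $E$ via the arithmetic Noether formula; potential-theoretic arguments in the spirit of Elkies, Silverman and Petsche yield conditional lower bounds, but as far as I can see no unconditional and uniform estimate is available with present techniques. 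Producing such a uniform archimedean estimate, equivalently ruling out families of elliptic curves over number fields whose integral points accumulate at arbitrarily small canonical height while the discriminant grows without bound, is the heart of the conjecture.
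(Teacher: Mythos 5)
The statement you are trying to prove is \ref{conj:lang}, which is stated in the paper as a \emph{conjecture}: the paper offers no proof of it, citing only \cite{Lang1978Elliptic-curves} and \cite{Hindry2000Diophantine-geo} for the statement, and noting (just before \ref{lem:ST_for_EC}) that it is a theorem over global function fields by \cite{Hindry1988The-canonical-h}. The paper then \emph{assumes} it in characteristic zero. So there is no proof in the paper to compare yours against, and the honest assessment is that your proposal is not a proof either --- as you yourself concede in your final paragraph.

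Your sketch of the function field case is essentially the Hindry--Silverman / Shioda route and could in principle be made rigorous: the identification of $\hat{\on{h}}(a)$ with an intersection number on the minimal elliptic surface, the comparison of the discriminant with $\deg\pi_*\omega_{\cl{E}/C}$ via Noether's formula, and the uniform control of the fibre-component corrections are all standard (though you would need to be careful that the ``uniform bound on the correction terms'' is genuinely uniform in $E$ and not just in $C$, and to handle the isotrivial case separately). The genuine gap is exactly where you locate it: in the number field case the non-archimedean local heights of an everywhere-integral point all vanish, the discriminant can still be arbitrarily large, and no unconditional archimedean lower bound of the required strength is known. Decomposing $\hat{\on{h}}$ into local terms and bounding each one separately therefore \emph{cannot} close the argument --- the sum of the known lower bounds for the individual local terms is not bounded below by a positive multiple of $\log N_{k/\qq}\Delta_{E/k}$, and producing such a bound is the open content of the conjecture, not a technical step. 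If your goal is to use this statement in the context of the paper (e.g.\ in \ref{lem:ST_for_EC}), the correct move is to assume it as a hypothesis in characteristic zero and invoke \cite{Hindry1988The-canonical-h} in positive characteristic, which is what the paper does.
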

\begin{lemma}\label{lem:ST_for_EC}
If $\on{char}K = 0$ then assume \ref{conj:lang} holds. \Cref{conjecture_SSP} is true assuming that $A/S$ is a family of elliptic curves and that $d=1$. 
\end{lemma}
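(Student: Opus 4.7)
The plan is to decompose $T_\epsilon(1) = \{s \in S(K) : \hat{\on{h}}(\sigma(s)) \le \epsilon\}$ into its torsion and non-torsion parts, and to show each lies in a proper Zariski closed subscheme of $S$. For the torsion part, since we restrict to $s \in S(K)$, the specialisation $\sigma(s)$ lies in $E_s(K)_{\mathrm{tors}}$, whose order is bounded by some $B = B(K)$ by the uniform boundedness theorem of Mazur (with the extensions of Kamienny--Merel to number fields, and their analogues over global function fields). Hence the torsion part lies in $\bigcup_{N=1}^{B} \ker(N\sigma)$, a finite union of closed subschemes of $S$, each proper because $\sigma$ has infinite order. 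Thus the torsion part alone is never Zariski dense.

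For the non-torsion part I would apply Lang's conjecture \ref{conj:lang} (assumed when $\on{char}K = 0$, a theorem of Hindry--Silverman in the function field case) to deduce
\[
\hat{\on{h}}(\sigma(s)) \ge c \log N_{K/\bb{Q}} \Delta_{E_s/K},
\]
so that $\hat{\on{h}}(\sigma(s)) \le \epsilon$ bounds $N_{K/\bb{Q}} \Delta_{E_s/K}$ from above uniformly in $s$. By Shafarevich's finiteness theorem for elliptic curves of bounded discriminant over a global field, only finitely many $K$-isomorphism classes $[E_1], \ldots, [E_m]$ arise among the $E_s$, with $j$-invariants in a finite set $\{j_1, \ldots, j_m\} \subset K$. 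The $j$-invariant morphism $J\colon S \to \bb{A}^1_K$ sends the non-torsion part of $T_\epsilon(1)$ into $\{j_1, \ldots, j_m\}$; if $J$ is non-constant, then $J^{-1}\{j_1, \ldots, j_m\}$ is the required proper closed subset, settling the non-isotrivial case.

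The hardest part is the isotrivial case, in which $J$ is constant. Here I would pass to a finite \'etale Galois cover $\pi \colon S' \to S$ of degree $D$ trivialising the $\on{Aut}(E_0)$-torsor classifying $E/S$, obtaining $E \times_S S' \cong E_0 \times_K S'$ for a fixed elliptic curve $E_0/K$ whose $j$-invariant equals the constant value of $J$; the pulled-back section becomes a $K$-morphism $\sigma' \colon S' \to E_0$. If $\sigma'$ is constant then $\hat{\on{h}}(\sigma(s))$ is a single positive value (since $\sigma$ has infinite order), so any $\epsilon$ smaller than this value makes $T_\epsilon(1)$ empty. If $\sigma'$ is non-constant its fibres are proper closed in $S'$, so Northcott's theorem applied on $E_0(\bar{K})$ restricted to points of residue degree at most $D$ over $K$ shows that the small-height values of $\sigma'$ form a finite set, whose preimage is proper closed in $S'$ and descends under the finite map $\pi$ to a proper closed subset of $S$. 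The subtle points here are verifying that $\on{Aut}(E_0)$ is a finite \'etale $K$-group scheme (which requires $\on{char}K \nmid 6$ in the worst case) so that the trivialising cover can be arranged \'etale, and tracking the fact that $K$-rational points of $S$ lift to points of $S'$ of residue degree at most $D$, which is exactly what is needed for a bounded-degree Northcott estimate to apply uniformly.
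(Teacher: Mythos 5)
Your proof is correct, and it takes a genuinely different (and more elaborate) route from the paper's. The paper fixes $\epsilon$ once and for all, depending only on $K$: taking $b$ to be the smallest positive height of a non-torsion $K$-point on any of the finitely many elliptic curves over $K$ with everywhere good reduction, and $m$ to be the smallest value of $c\log N\Delta_{E/K}$ over curves with at least one bad place, one sets $\delta=\min(b,m)>0$; for $\epsilon<\delta$, Lang's conjecture together with Northcott forces $\tor_\epsilon(1)=\tor(1)$, and $\tor(1)$ is not dense by Mazur via the easy direction of \ref{thm:main_equivalence}. You instead split $\tor_\epsilon(1)$ into torsion and non-torsion parts, handle the torsion part directly by Mazur, and for the non-torsion part push the finiteness through the geometry: Lang plus Shafarevich pins the $j$-invariant to a finite set, killing the non-isotrivial case for \emph{every} $\epsilon$, and you then treat the isotrivial case by untwisting on a finite \'etale cover and applying a bounded-degree Northcott estimate on the constant fibre $E_0$. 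Your route thus yields a slightly stronger conclusion outside one corner case (isotrivial with $\sigma'$ constant): $\tor_\epsilon(1)$ is not dense for \emph{all} $\epsilon>0$, not merely small $\epsilon$, which is the feature the paper notes Silverman--Tate obtain when $\dim S=1$ but refrains from conjecturing in general. The cost is the twisting machinery, the small-characteristic restriction you flag for $\on{Aut}(E_0)$ (which the paper's argument avoids), and the fact that in the corner case your $\epsilon$ depends a priori on $\sigma$ rather than only on $K$ (though one can check it is at least the paper's $\delta$).
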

\begin{proof}

Let $\Sigma$ denote the finite set of elliptic curves over $K$ with everywhere good reduction, and let $b>0$ denote the smallest height of a non-torsion $K$-point appearing on any curve in $\Sigma$, or set $b=1$ if no such exists. Let $c$ be the constant from Lang's conjecture, and let $m$ denote the infimum of the values taken by the expression $c\cdot \log \Delta_{E/K}$ as $E$ runs over all elliptic curves over $K$ with at least one place of bad reduction; this infimum is achieved (since there are only finitely many curves of bounded discriminant) and is positive (by our bad-reduction assumption). 


 Then setting $\delta = \min (b, m)$ we find for all $\delta > \epsilon \ge 0$ we have 
\begin{equation*}
\tor_\epsilon(1) = \tor(1). 
\end{equation*}

Now \ref{conj:SUB} is known for elliptic curves over the rationals by work of Mazur \cite{Mazur1977Modular-curves-}, \cite{Mazur1978Rational-isogen}, and over global function fields by various authors (see eg. \cite{Poonen2007Gonality-of-mod}), so we know that \ref{conjecture_ST} holds in our situation, hence $\tor(1)$ is not Zariski dense in $S$. 
\end{proof}

In general, \ref{conjecture_SSP} might be expected to follow from \ref{conjecture_ST} and very good lower bounds on the heights of non-torsion points. The remainder of this article will be devoted to proving \ref{conjecture_SSP} in some special cases for abelian varieties of higher dimension.  

\subsection{Strategy for proving more special cases of \ref{thm:main_sparsity}}

The remainder of the paper will be devoted to proving \ref{conjecture_SSP} for families of curves over `geometrically simple' base schemes and assuming that the jacobian admits a N\'eron model over that base (see \ref{thm:main_sparsity} for the precise statement). As discussed in the introduction we will use a number of tools from the theory of N\'eron models and height jumps developed in the papers \cite{Holmes2014Neron-models-an}, \cite{Holmes2014A-Neron-model-o}, \cite{David-Holmes2014Neron-models-an} and \cite{Ignacio-Burgos-Gil2015The-singulariti}. In an attempt to keep the current work reasonably self-contained we will briefly recall the main definitions and results we use as we go along. 

The strategy of the proof was briefly discussed in the introduction, but we will give a slightly more detailed outline here before proceeding. We begin with a smooth projective variety $S/K$ and a nodal curve $C/S$; by a \emph{nodal curve} we mean a proper flat finitely presented morphism all of whose geometric fibres are reduced, connected, of dimension 1, and have at worst ordinary double point singularities. Write $U\sub S$ for the locus where $C/S$ is smooth; we assume $U$ is dense in $S$. Write $J$ for the jacobian of $C_U/U$, and let $\sigma \in J(S)$ be a section of infinite order. We consider the function 
\begin{equation*}
\hat{\on{h}}_\sigma\colon U(\bar{\bb{Q}}) \ra \bb{R}_{\ge 0}; \;\; s \mapsto \hat{\on{h}}(\sigma(s)). 
\end{equation*}
We decompose $\hat{\on{h}}_\sigma$ as a sum of two functions, 
\begin{equation*}
\hat{\on{h}}_\sigma = \on{h}_\cl{L} + j
\end{equation*}
where $\on{h}_\cl{L}$ is a Weil height on $U$ with respect to a certain line bundle on $S$, and $j$ is an `error term', the \emph{height jump}. 

The line bundle $\cl{L}$ will be the admissible extension of Deligne pairing of $\sigma$ with itself, cf. \ref{sec:defining_height_jump}. In \ref{sec:defining_height_jump} we will also define the height jump $j$. In \ref{sec:computing_jump} we will make the jump explicit in certain situations, following closely the presentation in \cite{David-Holmes2014Neron-models-an}. This will require a discussion of Green's functions on resistive networks. In \ref{sec:aligned_jump_vanishes} we will show that the jump vanishes if and only if $J$ admit a N\'eron model over a suitable $\bb{Z}$-model of $S$; this is not essential for our other results, but demonstrates the close link between N\'eron models and the height jump. In \ref{sec:gen_aligned_bound_j} we return to our main aim of proving \ref{thm:main_sparsity}, showing that the jump is bounded for curves the jacobians of whose generic fibres admit N\'eron models. In \ref{sec:heights_and_rat} we compare ways of associating heights to line bundles which are not ample, giving a key inequality needed for the proof of \ref{thm:main_sparsity}. Finally in \ref{sec:proof_of_second_main} we put these ingredients together to prove our main result \ref{thm:main_sparsity}.

\subsection{Defining the algebraic height jump}\label{sec:defining_height_jump}

Let $S$ be a regular scheme, $C \ra S$ a generically smooth nodal curve, and $U \sub S$ the largest open over which $C$ is smooth. Write $J$ for the jacobian of $C_U /U$. Let $\sigma$, $\tau \in J(U)$ be two sections. Write $\ca{P}$ for the rigidified Poincar\'e bundle on $J \times_U J$ (cf. \cite{Moret-Bailly1985Metriques-permi}). Write $\Span{\sigma, \tau}$ for the dual of the pullback $(\sigma, \tau)^*\ca{P}$ - it is a line bundle on $U$ called the \emph{Deligne pairing} of $\sigma $ and $\tau$, cf. \cite{David-Holmes2014Neron-models-an}. 

We know from \cite{Holmes2014Neron-models-an} that there exists a largest open subset $U \sub V \sub S$ such that the complement of $V$ in $S$ has codimension 2 and such that $J$ admits a N\'eron model over $V$. The N\'eron model is of finite type by \cite{Holmes2016Quasi-compactne}, so there exists $n>0$ such that $n\sigma$ and $n\tau$ pass through the identity component $N^0$ of the N\'eron model. By \cite[definition II.1.2.7 and theorem II.3.6]{Moret-Bailly1985Pinceaux-de-var} the Poincar\'e bundle admits a unique rigidified extension to $N^0\times_S N^0$, which we pull back to $V$ along $(n\sigma,n\tau)$. Since $S$ is regular and the complement of $V$ has codimension 2, this line bundle on $V$ has a unique extension to a line bundle on $S$. Raising this line bundle to the power $1/n^2$ we obtain a $\bb{Q}$-line bundle on $S$ which we call the \emph{admissible extension of the Deligne pairing} (or just \emph{the admissible pairing}), and write $\Span{\sigma, \tau}_a$ - it is independent of the choice of $n$. This is also known as the `Lear extension', as in the setting of complex geometry (or more generally Hodge theory) it can also be defined by requiring that certain metrics extend continuously outside some codimension 2 subset of the boundary, see for example \cite{Lear1990Extensions-of-n}, \cite{Hain2013Normal-function}, \cite{Ignacio-Burgos-Gil2015The-singulariti}.

Suppose $T$ is another regular scheme, and $f\colon T\ra S$ is a morphism. We say $f$ is \emph{non-degenerate} if $f^{-1}U$ is dense in $T$. Given a non-degenerate morphism $f\colon T \ra S$, we have a canonical isomorphism of line bundles on $f^{-1}U$
\begin{equation*}
f^*\Span{\sigma, \tau}|_{f^{-1}U} = \Span{f^*\sigma, f^*\tau}|_{f^{-1}U} = \Span{f^*\sigma, f^*\tau}_a|_{f^{-1}U}. 
\end{equation*}
Thus the bundle
\begin{equation*}
f^*\Span{\sigma, \tau}_a^\vee \otimes \Span{f^*\sigma, f^*\tau}_a
\end{equation*}
is canonically trivial over $f^{-1}U$, and so the section `$1$' over $U$ gives a canonical rational section over $T$. We define the \emph{height jump} $j(\sigma, \tau, f)$ associated to $\sigma$, $\tau$ and $f$ to be the corresponding $\bb{Q}$-divisor on $T$.

\begin{lemma} \label{mostlytrivial}
Assume $f \colon T \ra  S$ is a non-degenerate morphism such that $f^{-1}(S\setminus V)$ has codimension at least 2 in $T$. Then the formation of $\Span{D,E}_a$ is compatible with base change along $f$.
\end{lemma}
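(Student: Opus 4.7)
The plan is to exploit the two uniqueness statements underlying the definition of the admissible pairing: the Moret-Bailly uniqueness of the rigidified extension of the Poincar\'e bundle to the identity component of the N\'eron model, together with the uniqueness of line-bundle extensions across a closed subset of codimension at least $2$ in a regular scheme. On the $T$-side, using the same existence results from \cite{Holmes2014Neron-models-an} cited in the construction of the admissible pairing, let $V'\sub T$ be a maximal open containing $f^{-1}U$, with $T\setminus V'$ of codimension at least $2$, on which $f^*J|_{f^{-1}U}$ admits a N\'eron model $N'$. Choose $n\ge 1$ large enough that $n\sigma$, $n\tau$ factor through $N^0$ over $V$ and $f^*(n\sigma)$, $f^*(n\tau)$ factor through $(N')^0$ over $V'$. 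Set $W := V'\cap f^{-1}V$; the hypothesis on $f$ combined with the codimension bound on $V'$ ensures $T\setminus W$ has codimension at least $2$, so by regularity of $T$ it suffices to construct a canonical isomorphism between the restrictions of the two $\bb{Q}$-line bundles to $W$.

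On $W$, the N\'eron mapping property applied to the smooth $W$-group scheme $f^*N^0|_W$ yields a canonical morphism $\phi\colon f^*N^0|_W \ra (N')^0|_W$ of $W$-group schemes, restricting to the identity on $f^{-1}U\cap W$, through which the sections $f^*(n\sigma)$ and $f^*(n\tau)$ factor by their defining properties. Write $\cl{P}_V$ and $\cl{P}_{V'}$ for the unique rigidified extensions of the Poincar\'e bundle to $N^0\times_V N^0$ and to $(N')^0\times_{V'}(N')^0$ respectively. Both $f^*\cl{P}_V|_W$ and $(\phi\times\phi)^*\cl{P}_{V'}|_W$ are rigidified extensions of the (pullback) Poincar\'e bundle to $f^*N^0\times_W f^*N^0$, and applying Moret-Bailly uniqueness on $(N')^0\times_{V'}(N')^0$ and then pulling back along $\phi\times\phi$ identifies them canonically. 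Pulling back along the section $(f^*(n\sigma), f^*(n\tau))\colon W \ra f^*N^0\times_W f^*N^0$, using that it factors through $\phi\times\phi$, and extracting $n^2$-th roots then produces the required isomorphism on $W$, which extends uniquely to $T$ by the codimension-$2$ regularity argument.

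The main obstacle I anticipate is the careful handling of the identification of the two rigidified extensions in the second step. Since $f^*N^0|_W$ is not itself a N\'eron model, the Moret-Bailly uniqueness does not apply to it directly; one must argue that the two rigidified extensions $f^*\cl{P}_V|_W$ and $(\phi\times\phi)^*\cl{P}_{V'}|_W$ of the Poincar\'e bundle on $f^*N^0\times_W f^*N^0$ agree by comparing them after pushing through $\phi\times\phi$ to $(N')^0\times_{V'}(N')^0|_W$, where the uniqueness does apply. This amounts to a diagram chase verifying that pullback of rigidified line bundles along $\phi\times\phi$ is an injection on extensions with a given restriction to the generic fibre, which is routine but should be done with some care so that the rigidifications match on all sides and the final $n^2$-th root is well-defined as a $\bb{Q}$-line bundle.
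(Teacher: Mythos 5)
The overall structure of your argument — N\'eron mapping property to produce $\phi$, Moret-Bailly uniqueness of the biextension, and codimension-$2$ uniqueness of line-bundle extensions on the regular scheme $T$ — is the right one, and the paper's own proof simply defers to the analogous \cite[proposition 3.4]{David-Holmes2014Neron-models-an}, so you are working along essentially the same lines.

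However, the key step has a gap as written, and your own diagnosis of the obstacle is not quite right. You say Moret-Bailly uniqueness cannot be applied to $f^*N^0|_W$ directly because it is not a N\'eron model, and therefore propose to route the comparison through $(N')^0\times_{V'}(N')^0$ via $\phi\times\phi$, resting on the claim that pullback along $\phi\times\phi$ is injective on rigidified extensions with fixed generic restriction. That injectivity is not obvious: $\phi$ is only a homomorphism of semiabelian schemes extending the identity on a dense open, and there is no reason a priori for $\phi\times\phi$ to induce an injection on the relevant biextension groups. More importantly, the detour is unnecessary. Moret-Bailly's uniqueness \cite[theorem II.3.6]{Moret-Bailly1985Pinceaux-de-var} requires a semiabelian scheme over a normal noetherian base whose restriction to a dense open is abelian, not a N\'eron model. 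The identity component $N^0$ is semiabelian over $V$, semiabelianness is stable under base change, and $W$ is regular (hence normal) and noetherian, so $f^*N^0|_W$ is exactly the kind of object the theorem applies to. Applying uniqueness there directly identifies $f^*\cl{P}_V|_W$ with $(\phi\times\phi)^*\cl{P}_{V'}|_W$ as rigidified biextensions on $f^*N^0|_W\times_W f^*N^0|_W$, with no diagram chase needed, and the rest of your argument (factoring the pulled-back sections through $\phi$, pulling back, extracting $n^2$-th roots, and extending across codimension $2$) then goes through as you describe.
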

\begin{proof} This is a slight generalisation of \cite[proposition 3.4]{David-Holmes2014Neron-models-an}; now we know the N\'eron model is of finite type, the same proof will work with trivial modifications. 
\end{proof}

\begin{corollary} \label{cor:NM_implies_no_jump}Assume that $J_U$ extends into a N\'eron model over $S$. Then for all non-degenerate morphisms $f \colon T \ra S$, the height jump divisor on $T$ is trivial.
\end{corollary}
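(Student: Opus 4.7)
The plan is to deduce the corollary directly from \Cref{mostlytrivial}. The hypothesis that $J_U$ extends to a N\'eron model over all of $S$ means that, in the notation preceding \Cref{mostlytrivial}, we can take $V = S$. Then for any non-degenerate morphism $f \colon T \ra S$ we have $f^{-1}(S \setminus V) = \emptyset$, which trivially has codimension at least $2$ in $T$. Thus the hypotheses of \Cref{mostlytrivial} are satisfied, giving a canonical isomorphism
\begin{equation*}
f^*\Span{\sigma, \tau}_a \;\xrightarrow{\;\sim\;}\; \Span{f^*\sigma, f^*\tau}_a
\end{equation*}
of $\bb{Q}$-line bundles on $T$.

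Next I would verify that this isomorphism restricts to the canonical identification used to define the height jump. Over $f^{-1}U$ both sides agree with the ordinary Deligne pairing $\Span{f^*\sigma, f^*\tau}$, and one checks that the isomorphism provided by \Cref{mostlytrivial} is precisely the one constructed by uniquely extending the identification on $f^{-1}U$ to the complement (of codimension $\geq 2$) via regularity of $T$. Hence the rational section ``$1$'' of
\begin{equation*}
f^*\Span{\sigma, \tau}_a^\vee \otimes \Span{f^*\sigma, f^*\tau}_a
\end{equation*}
defined a priori only on $f^{-1}U$ extends to a nowhere-vanishing global section on all of $T$.

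It then follows immediately that the associated $\bb{Q}$-divisor $j(\sigma, \tau, f)$ is the zero divisor, which is exactly the statement of the corollary.

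I expect no substantial obstacle here: the whole content of the corollary is packaged in \Cref{mostlytrivial}, together with the observation that when the N\'eron model exists over all of $S$ there is no boundary to worry about. The only point requiring mild care is the compatibility of the base-change isomorphism with the canonical trivialisation over $f^{-1}U$, but this is built into the construction of the admissible extension as the unique extension across a codimension-$2$ locus.
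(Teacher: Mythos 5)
Your proof is correct and is exactly the argument the paper intends: the corollary is stated immediately after \ref{mostlytrivial} with no separate proof precisely because it follows by taking $V = S$, so $f^{-1}(S \setminus V) = \emptyset$ vacuously has codimension $\ge 2$, and base-change compatibility of the admissible pairing makes the canonical rational section ``$1$'' a global trivialisation.
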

In \ref{prop:equivalence} we shall see that the sufficient condition of this corollary is also essentially a necessary condition, even if we restrict our test objects to traits. 

\begin{remark}
 The jump is defined for arbitrary $T$, but it is generally enough to be able to compute it when $T$ is a trait, since the jump is stable under flat base-change by \ref{mostlytrivial}, in particular under localisation at generic points of prime divisors in $T$. 

\end{remark}

\subsection{Computing the algebraic height jump}\label{sec:computing_jump}

In this section we summarise some results on the height jump from \cite{David-Holmes2014Neron-models-an}. In particular, \ref{formula_jump_green} shows how to compute the jump explicitly in certain situations, which will be important for our later results. We will relate the height jump to Green's functions on graphs. For some of this we will use the language of electrical networks, but we only use it as a language and as a guide to intuition; all our proofs are still (intended to be) rigorous. More details of what follows in the remainder of this section can be found in \cite[\S6 and \S7]{David-Holmes2014Neron-models-an}. 

\subsubsection{Resistive networks and Green's functions}
For us graphs are allowed loops and multiple edges, and are not directed. A \emph{resistive network} is a graph $\Gamma$ together with a labelling $\mu\colon \on{Edges}(\Gamma) \ra \bb{R}_{\ge 0}$ of the edges by non-negative real numbers, which we can think of as resistances (if the labels are all positive, it is called a \emph{proper} resistive network, otherwise it is \emph{improper}, which is an important distinction in what follows). 

The \emph{Laplacian} of a proper resistive network $(\Gamma, \mu)$ is the linear map $L\colon \bb{R}^{\on{Vert} \Gamma} \ra \bb{R}^{\on{Vert} \Gamma}$ which sends a vector $(x_v:v \in \on{Vert}\Gamma)$ to the vector whose component at a vertex $u$ is 
\begin{equation*}
\sum_{v \in \on{Vert}\Gamma} \sum_{e} \frac{x_{u}-x_v}{\mu(e)}
\end{equation*}
where the second sum is over all edges with one endpoint at $v$ and the other at $u$. 
\begin{remark}
If we think of a vector $X = (x_v:v \in \on{Vert}\Gamma)$ as an assignment of a voltage to every vertex in $\Gamma$, then for a directed edge $e$ from $u$ to $v$, the term $(u-v)/e$ can be thought of as the current flowing along the edge $e$. The component of the vector $LX$ at a vertex $v$ is then the total current flowing out of the vertex $v$ into the rest of the network. 
\end{remark}
\begin{definition}
Assume that $(\Gamma, \mu)$ is a proper resistive network with exactly one connected component. Let $X$, $Y \in \bb{R}^{\on{Vert}\Gamma}$. We define the \emph{Green's function} at $X$, $Y$ to be 
\begin{equation*}
\on{gr}(\Gamma, \mu;X, Y) = XL^+Y, 
\end{equation*}
where $L^+$ is the Moore-Penrose pseudo inverse of $L$. 
\end{definition}
If we fix $X$ and $Y$ and allow $\mu$ to vary over $\bb{R}_{>0}^{\on{Edges}\Gamma}$ then it is easy to see that the function sending $\mu$ to $\on{gr}(\Gamma, \mu;X, Y)$ is continuous. We can also extend the definition of the Green's function to improper networks:
\begin{definition}
Let $(\Gamma, \mu)$ be a resistive network with exactly one connected component and let $X$, $Y \in \bb{R}^{\on{Vert}\Gamma}$. Let $\Gamma'$ be the graph obtained from $\Gamma$ by contracting every edge with resistance 0, and write $\mu'$, $X'$ and $Y'$ for the corresponding resistance function and vertex weightings on $\Gamma'$. Then we define the \emph{Green's function} at $X$, $Y$ to be 
\begin{equation*}
\on{gr}(\Gamma, \mu;X, Y) = X'L'^+Y', 
\end{equation*}
where $L'^+$ is the Moore-Penrose pseudo inverse of the Laplacian $L'$ on $\Gamma'$. 
\end{definition}
Now for fixed $X$ and $Y$ we get a function $\bb{R}_{\ge0}^{\on{Edges}\Gamma} \ra \bb{R}$ sending $\mu$ to $\on{gr}(\Gamma, \mu;X, Y)$. It is no longer obvious that this function should be continuous, but it is in fact continuous, see \cite[proposition 6.6]{David-Holmes2014Neron-models-an}. 

\subsubsection{Some terminology for families of nodal curves}\label{sec:NM_finite_type}
In what follows we will need some precise descriptions of the local structure of families of nodal curves, which we will collect here. Let $S$ be a scheme, and $C/S$ a nodal curve. If $s\in S$ is a point then a \emph{non-degenerate trait through $s$} is a morphism $f\colon T \ra S$ from the spectrum $T$ of a discrete valuation ring, sending the closed point of $T$ to $s$, and such that $f^*C$ is smooth over the generic point of $T$. 

We say a nodal curve $C/S$ is \emph{quasisplit} if the morphism $\on{Sing}(C/S) \ra S$ is an immersion Zariski-locally on the source (for example, a disjoint union of closed immersions), and if for every field-valued fibre $C_k$ of $C/S$, every irreducible component of $C_k$ is geometrically irreducible. 

Suppose we are given $C/S$ a quasisplit nodal curve and $s \in S$ a point. Then we write $\Gamma_s$ for the \emph{dual graph} of $C/S$ - this makes sense because $C/S$ is quasisplit and so all the singular points are rational points, and all the irreducible components are geometrically irreducible. Assume that $C/S$ is smooth over a schematically dense open of $S$. If we are also given a non-smooth point $c$ in the fibre over $s$, then there exists an element $\alpha \in \ca{O}_{S,s}$ and an isomorphism of completed \'etale local rings (after choosing compatible geometric points lying over $c$ and $s$)
\begin{equation*}
\widehat{\ca{O}^{et}}_{C,c} \stackrel{\sim}{\ra} \frac{\widehat{\ca{O}^{et}}_{S,s}[[x,y]]}{(xy-\alpha)}. 
\end{equation*}
This element $\alpha$ is not unique, but the ideal it generates in $\ca{O}_{S,s}$ is unique. We label the edge of the graph $\Gamma_s$ corresponding to $c$ with the ideal $\alpha\ca{O}_{S,s}$. In this way the edges of $\Gamma_s$ can be labelled by principal ideals of $\ca{O}_{S,s}$. 

If $\eta$ is another point of $S$ with $s \in \overline{\{\eta\}}$ then we get a \emph{specialisation map} 
\begin{equation*}
\on{sp}\colon\Gamma_s \ra \Gamma_\eta
\end{equation*}
on the dual graphs, which contracts exactly those edges in $\Gamma_s$ whose labels generate the unit ideal in $\ca{O}_{S, \eta}$. If an edge $e$ of $\Gamma_s$ has label $\ell$, then the label on the corresponding edge of $\Gamma_\eta$ is given by $\ell\ca{O}_{S,\eta}$.

\subsubsection{The jump in terms of Green's functions}
Now we will apply our discussion of Green's functions above to computing the height jump. Let $S$ be a regular noetherian scheme, $C/S$ a generically-smooth quasisplit nodal curve, and $U\sub S$ the largest open over which $C$ is smooth. Let $s \in S$ be a point, and write $(\Gamma, \ell)$ for the labelled graph of $C/S$ at $s$, where the labels take values in the monoid of principal ideals of $\ca{O}_{S,s}$, cf. \ref{sec:NM_finite_type}. Let $Z_1, \cdots, Z_r$ be prime divisors in $S$ forming the boundary of $U$, and for each $i$ let $z_i$ be a local equation of $Z_i$ in the local ring $\ca{O}_{S,s}$. Now if $e$ is an edge of $\Gamma$, the label $\ell(e)$ is generated by $z_1^{a_1}\cdots z_r^{a_r}$ for some unique $(a_1, \ldots, a_r) \in \bb{Z}_{\ge 0}^r$. Given $I \sub \{1 , \ldots,  r\}$ we define a new labelled graph $(\Gamma, \ell_I)$ by requiring that, if $\ell(e) = (z_1^{a_1}\cdots z_r^{a_r})$, then $\ell_I(e) = (\prod_{i \in I}z_i^{a_i})$. In particular, $\ell_{\{1, \ldots, r\}} = \ell$. We write $\ell_i$ for $\ell_{\{i\}}$. 

Given a horizontal divisor $D$ on $C/S$ supported on sections through the smooth locus and of relative degree 0, we can define an associated combinatorial divisor $\ca{D}$ in $\bb{R}^{\on{Vert}\Gamma}$ by setting
\begin{equation*}
\ca{D}(y) = \on{deg}D|_Y
\end{equation*}
for each vertex $y$ of $\Gamma$ with corresponding irreducible component $Y$ of $C_s$. 

Suppose now that we have a non-degenerate trait $f\colon T \ra S$ through $s$. Then we have a pullback map $f^\# \colon \ca{O}_{S,s} \ra \ca{O}_T(T)$. We obtain a labelling of the edges of $\Gamma$ with values in $\bb{Z}_{\ge 0}$ by sending an edge $e$ to $\on{ord}_Tf^\#\ell(e)$, and we write $\on{ord}_Tf^\#\ell$ for this labelling. We make corresponding definitions for the $\ell_I$. 

\begin{theorem}[\cite{David-Holmes2014Neron-models-an}, theorem 7.8]
Suppose we are given two horizontal divisors $D$ and $E$ on $C/S$ of relative degree 0 and supported on sections through the the smooth locus of $C/S$, with associated combinatorial divisors $\ca{D}$ and $\ca{E}$. Write $\sigma$ (resp. $\tau$) for the section of the jacobian $J$ of $C_U$ induced by $D$ (resp. $E$). Then the height jump associated to $\sigma$, $\tau$ and $f$ is given by the formula $j(\sigma, \tau, f) = j\cdot [t]$ where $t$ is the closed point of $T$, and $j$ is given by
\begin{equation}\label{formula_jump_green}
j = \on{gr}(\Gamma, \on{ord}_Tf^\#\ell;\ca{D}, \ca{E}) - \sum_{i=1}^r\on{gr}(\Gamma, \on{ord}_Tf^\#\ell_i;\ca{D}, \ca{E}). 
\end{equation}
\end{theorem}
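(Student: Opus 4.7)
The plan is to reduce both sides of the formula to local intersection-theoretic computations on regular models of the curve, and to recognise each as the value of a discrete Green's function on the dual graph. First I would localise the problem at the closed point $s \in S$: every piece of data appearing on either side --- the dual graph $\Gamma$ with its labelling $\ell$, the combinatorial divisors $\ca{D}$, $\ca{E}$, and the valuations $\on{ord}_T f^\# z_i$ --- is determined in $\ca{O}_{S,s}$, so one may pass to the strict henselisation and assume that $C/S$ is split nodal, with each node cut out by $xy = \ell(e)$.

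Next I would compute $\Span{f^*\sigma, f^*\tau}_a$ on the trait $T$ directly. Let $\ca{C} \to T$ be a regular resolution of $f^*C \to T$: each node corresponding to an edge $e$ with $n(e) := \on{ord}_T f^\# \ell(e) > 0$ resolves into a chain of $n(e)-1$ rational curves, while edges with $n(e) = 0$ correspond to points that are already smooth. The dual graph of $\ca{C}_t$ is therefore obtained from $\Gamma$ by replacing each edge $e$ with a path of $n(e)$ unit-resistance edges (contracting $e$ when $n(e) = 0$). On a trait, the admissible Deligne pairing coincides with the pairing computed via the Poincar\'e bundle on the N\'eron model, and thus reduces to a vertical-orthogonalisation calculation: one finds rational vertical divisors $V_D$, $V_E$ such that $(f^*D + V_D) \cdot F = 0 = (f^*E + V_E) \cdot F$ for every component $F$ of $\ca{C}_t$, and intersects. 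Because $D$, $E$ are supported on sections through the smooth locus, every vertex internal to a subdivided edge has zero $\ca{D}$- and $\ca{E}$-weight, so the resulting intersection number is exactly the Green's function of the subdivided graph against $\ca{D}$, $\ca{E}$; by the series-law for resistors this equals $\on{gr}(\Gamma, \on{ord}_T f^\# \ell; \ca{D}, \ca{E})$, which is the first term of the formula.

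For the admissible pairing on $S$ the same circle of ideas applies one boundary divisor at a time. Localising at the generic point $\xi_i$ of $Z_i$ produces a trait situation in which only nodes whose labels involve $z_i$ survive, so the multiplicity of $\Span{D,E}_a$ along $Z_i$ is $\on{gr}(\Gamma, \on{ord}_{\xi_i}\ell; \ca{D}, \ca{E})$, where the labels are read as valuations at $\xi_i$ (equivalently, as the $\ell_i$-data evaluated at $\xi_i$). Pulling back along $f$ multiplies this multiplicity by $\on{ord}_T f^\# z_i$, and the homogeneity of the Green's function in the labelling --- scaling $\mu \mapsto \lambda \mu$ scales $L^+$, and hence $\on{gr}$, by $\lambda$ --- lets this factor be absorbed into the label, yielding $\on{gr}(\Gamma, \on{ord}_T f^\# \ell_i; \ca{D}, \ca{E})$ as the contribution of $Z_i$ to $\on{ord}_t f^*\Span{D,E}_a$. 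Summing over $i$ and subtracting from the trait computation produces the claimed formula.

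The main obstacle, and the reason the proof is delicate rather than routine, is the behaviour of Green's functions on improper networks. Whenever $f^\# z_i$ is a unit at $T$, the corresponding edges of $(\Gamma, \on{ord}_T f^\# \ell_i)$ acquire label zero and must be contracted before inverting the Laplacian; matching these contractions to the geometric specialisation of $t \in T$ through the various codimension-$\geq 2$ strata of $S \setminus U$ is precisely what requires the continuity statement for Green's functions on improper networks, \cite[proposition 6.6]{David-Holmes2014Neron-models-an}. With that tool in hand, the four terms in the formula glue correctly across the boundary of the smooth locus and the subtraction produces the height jump as asserted.
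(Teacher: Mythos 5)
This theorem is not proved in the present paper: it is stated as a citation of theorem~7.8 of \cite{David-Holmes2014Neron-models-an}, and the paper offers no argument of its own. So there is no internal proof to compare your sketch against. What you have written is a reasonable reconstruction of how such a proof goes, and I can't find a genuine gap in the outline: the reduction to the strictly henselian local case, the minimal regular resolution of $f^*C \to T$ (each node $xy = f^\#\ell(e)$ blowing up into a chain of $n(e)-1$ rational curves, so the dual graph is the $n(e)$-fold subdivision of $\Gamma$), the identification of $\on{ord}_t\Span{f^*\sigma,f^*\tau}_a$ with the intersection number of the vertically-orthogonalised divisors $f^*D + V_D$ and $f^*E + V_E$, the series law collapsing the subdivided graph back to $(\Gamma,\on{ord}_T f^\#\ell)$, the computation of $\on{ord}_{Z_i}\Span{\sigma,\tau}_a$ by the same argument at the DVR $\ca{O}_{S,\xi_i}$, and the degree-$1$ homogeneity of $\on{gr}$ in the resistances absorbing the factor $\on{ord}_T f^\# z_i$ into the label to produce $\on{ord}_T f^\#\ell_i$. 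You also correctly flag that the formula only makes sense because of the continuity of Green's functions on improper networks (\cite[proposition~6.6]{David-Holmes2014Neron-models-an}), since for a given trait most of the labellings $\on{ord}_T f^\#\ell_i$ will have edges of resistance zero that must be contracted.

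Two points are glossed over rather than wrong. First, the passage from the Deligne/Poincar\'e pairing to the vertical-orthogonalisation intersection number is the real analytic heart of the matter (it rests on Raynaud's description of the N\'eron model of a jacobian of a nodal curve and Moret-Bailly's rigidified extension of the Poincar\'e bundle over $N^0\times N^0$), and your phrase ``reduces to a vertical-orthogonalisation calculation'' compresses all of it. Second, you should say a word about the normalisation: $\sigma$ and $\tau$ need not lie in $N^0$, so one has to work with $n\sigma$, $n\tau$ and divide by $n^2$; this is harmless precisely because the intersection computation can be done directly with the divisors $D$, $E$ on the regular model rather than with the classes $[D]$, $[E]$, but it deserves a sentence. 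With those caveats the proposal looks sound, and it is in the spirit of how \cite[\S\,6--7]{David-Holmes2014Neron-models-an} proceeds.
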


\subsection{Vanishing of the jump is equivalent to the existence of a N\'eron model}\label{sec:aligned_jump_vanishes}

Let $S$ be an integral, noetherian, regular scheme, and let $C \to S$ be a generically smooth nodal curve. Let $U \subset S$ be the largest open subscheme of $S$ over which $C$ is smooth. We have seen that if the jacobian of the generic fibre of $C \to S$ has a N\'eron model over $S$, then for each non-degenerate morphism $f \colon T \to S$ the height jump divisor is trivial. In this section we give a partial converse to this result, using \ref{formula_jump_green}. This result is not needed to prove our main \ref{thm:main_sparsity}, but is intended to demonstrate the close connection between the vanishing of the jump and the existence of N\'eron models. 

We recall the following definition of alignment of labelled graphs and curves from \cite{Holmes2014Neron-models-an}: 
\begin{enumerate}
\item given a graph $\Gamma$ with an edge-labelling by a (multiplicatively written) monoid, we say $\Gamma$ is \emph{aligned} if for all cycles $\gamma$ in $\Gamma$, and for all pairs of edges $e_1$, $e_2$ on $\gamma$, there exist positive integers $n_1$, $n_2$ such that
\begin{equation*}
(\on{label}(e_1))^{n_1} = (\on{label}(e_2))^{n_2} \, ; 
\end{equation*}
\item given a geometric point $s \in S$, we say $C \to S $ is \emph{aligned at $s$} if the labelled reduction graph $\Gamma_s$ of $C$ above $s$ is aligned (where the monoid is the monoid of principal ideals in $\ca{O}_{S,s}$); 
\item we say \emph{$C \to S$ is aligned} if $C \to S$ is aligned at $s$ for all geometric points $s$ in $S$. 
\end{enumerate}
\begin{proposition}\label{prop:equivalence}  Assume $C$ admits a regular nodal model over $S$. The following conditions are equivalent:
\begin{enumerate}[label=$(\alph*)$]
\item \label{item:NM} The jacobian of $C_U \ra U$ has a N\'eron model over $S$;
\item \label{item:no_jump} For all nodal models $\tilde{C} \ra S$ of $C$, for all non-degenerate morphisms $f \colon T \to S$ with $T$ a trait and for all relative degree zero divisors $D, E$ with support contained in the smooth locus $\mathrm{Sm}(\tilde{C}/S)$ of $\tilde{C} \to S$, the height jump divisor $J(f;D,E)$ is trivial;
\item \label{item:exists_no_jump} There exists a nodal model $\tilde{C} \ra S$ of $C$ with the same property as in \ref{item:no_jump}; 
\item \label{item:exists_reg_aligned} There exists an aligned regular model of $C \to S$;
\item \label{item:all_aligned}Every nodal model of $C$ over $S$ is aligned. 
\end{enumerate}
\end{proposition}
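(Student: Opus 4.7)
The plan is to establish the cycle of implications $(a) \Rightarrow (b) \Rightarrow (c) \Rightarrow (e) \Rightarrow (d) \Rightarrow (a)$. Three of these are essentially immediate: $(a) \Rightarrow (b)$ is \ref{cor:NM_implies_no_jump} applied to any nodal model, using that all nodal models share the same smooth locus $C_U$ and hence the same jacobian $J_U$; $(b) \Rightarrow (c)$ holds because the regular nodal model guaranteed by the hypothesis of the proposition already witnesses $(c)$; and $(e) \Rightarrow (d)$ is trivial for the same reason. The implication $(d) \Rightarrow (a)$ is (essentially) the main theorem of \cite{Holmes2014Neron-models-an}, which asserts that an aligned regular nodal model has a jacobian admitting a N\'eron model over the base.

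This leaves the content-laden step $(c) \Rightarrow (e)$, which I would split into two parts: (i) the particular witness $\tilde{C}$ of $(c)$ is itself aligned; (ii) alignment of a single nodal model forces alignment of every nodal model, by appealing to the results of \cite{Holmes2014Neron-models-an} on how dual graphs of different nodal models of a common generic fibre are related by blow-ups and contractions of nodes and how alignment transforms under these operations. Part (ii) is essentially book-keeping; the substantive part is (i).

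For (i), proceed by contrapositive. Suppose $\tilde{C}$ fails to be aligned at some geometric point $s \in S$: the labelled dual graph $\Gamma_s$ then carries a cycle $\gamma$ with edges $e_1, e_2$ whose labels $\ell(e_k) = (z_1^{a_{k,1}} \cdots z_r^{a_{k,r}}) \in \ca{O}_{S,s}$ are not multiplicatively commensurable, i.e.\ the exponent vectors $(a_{k,i})_i \in \bb{Z}^r$ are not scalar multiples of each other. Construct a non-degenerate trait $f \colon T \ra S$ through $s$ so that the integer edge-labellings $\on{ord}_Tf^{\#}\ell$ and $\on{ord}_Tf^{\#}\ell_i$ on $\Gamma_s$ register this non-proportionality; concretely, pick $f$ such that $\on{ord}_T f^{\#}z_i > 0$ for at least two indices $i$ that distinguish the exponent vectors of $e_1$ and $e_2$. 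Choose horizontal divisors $D, E$ on $\tilde{C}$ of relative degree zero, supported on sections through the smooth locus, with combinatorial divisors $\ca{D}, \ca{E}$ concentrated on components adjacent to $e_1$ and $e_2$ on $\gamma$. Applying \ref{formula_jump_green}, the aim is to show that
$$\on{gr}(\Gamma, \on{ord}_Tf^{\#}\ell; \ca{D}, \ca{E}) - \sum_{i=1}^{r}\on{gr}(\Gamma, \on{ord}_Tf^{\#}\ell_i; \ca{D}, \ca{E}) \ne 0,$$
contradicting the vanishing in $(c)$.

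The main obstacle is establishing this nonvanishing of the Green's function expression. The difficulty is that the sum on the right involves partially contracted (improper) networks obtained by zeroing out edges not meeting $Z_i$, and one cannot rule out \emph{a priori} that these partial contributions telescope against the full-label term. The strategy I would pursue is to reduce, via standard resistive network moves (series/parallel combination, pendant-tree removal, and Y--$\Delta$ transformations, all of which preserve Green's functions between distinguished vertices), to the simple situation in which $\Gamma$ is essentially a single cycle containing $e_1$ and $e_2$. In this reduced picture the effective resistance between the relevant vertices is an explicit rational function of the edge labels, and non-proportionality of the two exponent vectors translates into an explicit polynomial non-identity in the uniformiser of $T$ which must be verified by hand; this verification is the delicate remaining piece.
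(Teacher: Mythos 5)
Your cycle $(a)\Rightarrow(b)\Rightarrow(c)\Rightarrow(e)\Rightarrow(d)\Rightarrow(a)$ is fine at the easy arrows, but the step $(c)\Rightarrow(e)$ breaks, and the place it breaks is precisely the part you dismiss as ``essentially book-keeping.'' Part (ii) asserts that alignment of a single nodal model forces alignment of every nodal model. This is false, and the paper's own remark immediately following the proposition gives the counterexample: a nodal curve over $S=\on{Spec}\bb{C}[[u,v]]$ whose dual graph at the closed point is a $1$-gon with label $(uv)$ is trivially aligned (its only cycle has a single edge), but blowing up the node produces a regular nodal model whose dual graph is a $2$-gon with labels $u$ and $v$, which is \emph{not} aligned. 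Alignment does not propagate across contractions of components. The same example also defeats your part (i) as a strategy: even if one shows that the witness $\tilde{C}$ of $(c)$ is aligned, that witness need not be regular, and ``some nodal model is aligned'' is strictly weaker than both $(d)$ and $(e)$.

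The paper avoids this by proving $(c)\Rightarrow(d)$ directly, via the contrapositive $(\neg d)\Rightarrow(\neg c)$, and by working throughout with a \emph{regular} nodal model (which exists by the standing hypothesis). Regularity is not decorative: it forces every edge label of $\Gamma_s$ to be a single prime $z_i$ rather than a power or product, and this is exactly what makes the terms $\on{gr}(\Gamma_s,\on{ord}_t f^\# \ell_{s,i};\ca{D},\ca{E})$ vanish for $i\neq 1$ once one takes $\ca{D}=\ca{E}=p-q$ with $p,q$ sections through the components at the two endpoints of the edge $e_1$. Your sketch permits composite labels $z_1^{a_1}\cdots z_r^{a_r}$, leaves the choice of $D,E$ and the trait $f$ vague, and — as you acknowledge — leaves the decisive non-vanishing of the Green's-function expression unproved, with only a hope of reducing to a single cycle via network moves (which would also have to be shown compatible with all the partial labellings $\ell_i$ simultaneously). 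The paper settles the non-vanishing cleanly and without any such reduction, using two facts about electrical flow: Corollary A.5 of \cite{David-Holmes2014Neron-models-an} shows that equality of the two effective resistances would force zero current along every edge not labelled $z_1$, while equation A.2 of the same reference (current flow computed via spanning trees) forces nonzero current along the adjacent $z_2$-labelled edge $e_2$, a contradiction.
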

We note that following \cite[Proposition 3.6]{Jong1996Smoothness-semi}, a sufficient condition for the existence of a regular nodal model of $C$ is that $C \to S$ be split nodal (in the sense of \cite{Jong1996Smoothness-semi}) and smooth over the complement of a strict normal crossings divisor in $S$. 
\begin{proof} 
The equivalence of \oref{item:NM}, \oref{item:exists_reg_aligned} and \oref{item:all_aligned} follows from the main results of \cite{Holmes2014Neron-models-an}. It is clear that 
\oref{item:no_jump} implies \oref{item:exists_no_jump}. The implication \oref{item:NM} $\implies$ \oref{item:no_jump} follows from \ref{mostlytrivial}. We will now show \oref{item:exists_no_jump} implies \oref{item:exists_reg_aligned}. In fact we will show the contrapositive, that ($\neg$\oref{item:exists_reg_aligned}) $\implies$ ($\neg$\oref{item:exists_no_jump}). Assume ($\neg$ \oref{item:exists_reg_aligned}), and consider any regular nodal model of $C$ over $S$. To simplify notation, we will assume this model is $C$ itself. By ($\neg$ \oref{item:exists_reg_aligned}) we have that $C \to S$ is not aligned. Both questions are \'etale-local on the base, and so we may assume that $S$ is the spectrum of a strictly henselian local ring, with closed point $s$, and that $C \to S$ is not aligned at $s$. Note that $C \to S$ is quasisplit since $S$ is strictly henselian. Let $z_1, \ldots, z_r$ be distinct height 1 prime ideals in $O_{S,s}$ such that $C$ is smooth over $U := S \setminus V(\prod_{i=1}^r z_i)$. Write $\Gamma_s$ for the labelled graph of $C$ over $s$. Since $C$ is regular, every label on $\Gamma_s$ is equal to one of the $z_i$; non-trivial powers and products cannot occur. 

Let $\gamma$ be a cycle in $\Gamma_s$ which contains two distinct labels - such a cycle exists exactly because $C \to S$ is not aligned at $s$. After possibly reordering the $z_i$, we may and do assume that $\gamma$ contains two adjacent edges $e_1$, $e_2$ with labels $z_1$, $z_2$ respectively. Let $p$, $q \in C(S)$ be sections through the smooth loci of the two irreducible components of $C_s$ corresponding to the (distinct) endpoints of $e_1$. Let $D = E = p-q$, degree-zero divisors on $C \to S$ supported on the smooth locus. Applying \ref{formula_jump_green}, we find that for any non-degenerate test curve $f\colon T \ra S$ the height jump is given by 
\begin{equation*}
g(\Gamma_s,\on{ord}_t f^\# \ell_s;\ca{D},\ca{E}) - \sum_{i=1}^r g(\Gamma_s,\on{ord}_t f^\# \ell_{s,i};\ca{D},\ca{E}) \, . 
\end{equation*}
We find that all the terms $ g(\Gamma_s,\on{ord}_t f^\# \ell_{s,i};\ca{D},\ca{E})$ vanish for $i \neq 1$, since for $i \neq 1$, in terms of electrical resistances, all edges labelled $z_1$ are contracted (assume value zero) in the graphs $(\Gamma_s,\on{ord}_t f^\# \ell_{s,i})$, so the resistance between $p$ and $q$ becomes zero. To prove ($\neg$ \oref{item:exists_no_jump}), it suffices to show that the equality
\begin{equation}\label{eq:must_fail}
g(\Gamma_s,\on{ord}_t f^\# \ell_s;\ca{D},\ca{E}) = g(\Gamma_s,\on{ord}_t f^\# \ell_{s,1};\ca{D},\ca{E})
\end{equation}
\emph{fails} for some non-degenerate test curve $f \colon T \ra S$. Choose any non-degenerate test curve $f\colon T \ra S$ such that the image of the closed point $t$ of $T$ is the closed point $s$ of $S$. Then we find that every label 
of $(\Gamma_s,\on{ord}_t f^\# \ell_s)$ is strictly positive. Equation \oref{eq:must_fail} is equivalent to the statement that the effective resistance between $p$ and $q$ on the graph $\Gamma_s$ with edges given resistance equal to the $\on{ord}_t f^\# \ell_s$ is equal to the resistance between $p$ and $q$ on the same graph but with all the edges \emph{not} labelled by $z_1$ being contracted (equivalently, having their resistances set to zero). Then by \cite[Corollary A.5]{David-Holmes2014Neron-models-an}, in the graph with `non-contracted' edges, no current flows through any edge not labelled by $z_1$ when 1 unit of current flows in at $p$ and out at $q$. But since there is a path (extendable to a spanning tree) from $p$ to $q$ that either starts or ends with $e_2$, \cite[equation A.2]{David-Holmes2014Neron-models-an} tells us that the current along the $z_2$-labelled edge $e_2$ must be nonzero, so we have reached a contradiction. 
\end{proof}
\begin{remark} We point out that the regularity assumption in part \oref{item:exists_reg_aligned} of the theorem is essential: a nodal curve over $S = \on{Spec}\bb{C}[[u,v]]$ with labelled graph over the closed point a 1-gon with label $(uv)$ - which is then necessarily not regular - is aligned, but its jacobian does not admit a N\'eron model over $S$.  
\end{remark}

\subsection{Bounding the height jump for generically aligned curves}\label{sec:gen_aligned_bound_j}

Recall from \ref{prop:equivalence} that the height jump vanishes whenever the jacobian of a family of curves admits a N\'eron model. The aim of this section is to show that if the base-change of our family to $K$ admits a N\'eron model, then the height jump is bounded. Here we write $\Lambda$ for the spectrum of the ring of integers of $K$ if $K$ is a number field, or for a smooth proper geometrically integral curve over a finite field with field of rational functions $K$ if $K$ is a global function field. If $L/K$ is a finite extension then we define $\Lambda_L$ similarly. 

%

%

\begin{theorem}\label{thm:generic_alignment_bounds_jump}
Let $S$ be a regular scheme proper and flat over $\Lambda$, and let $C \ra S$ be a nodal curve smooth over a dense open subscheme $U \hra S$ and with $C$ regular. Write $J $ for the jacobian of $C_U/U$. Suppose we are given integers $d_1, \ldots, d_n, e_1, \ldots, e_n$ with $\sum_i d_i = \sum_i e_i = 0$ and sections $\sigma_1, \ldots, \sigma_n, \tau_1, \ldots, \tau_n \in C^{sm}(S)$, and set $\sigma = [\sum_i d_i(\sigma_i)_U]$, $\tau = [\sum_i e_i (\tau_i)_U] \in J(U)$.

Suppose also that $J_K$ has a N\'eron model over $S_K$. Then there exists a constant $B$ such that for all finite extensions $L/K$ and for all $x \in U(L)$ we have
\begin{equation*}
\left|\frac{j(\sigma, \tau, \bar{x})}{[L:K]}\right| \le B
\end{equation*}
where $j(\sigma, \tau, \bar{x})$ denotes the height jump associated to $\sigma$, $\tau$ and $\bar{x}\colon\Lambda_L \ra S$. 
\end{theorem}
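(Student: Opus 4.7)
The plan is to combine the explicit formula \ref{formula_jump_green} with the alignment-vs-N\'eron-model characterisation of \ref{prop:equivalence}, and then reduce the boundedness statement to a Green's function estimate. First I would localise the support of the jump: let $Z \subseteq S$ denote the locus where $C \to S$ fails to be aligned. Non-alignment is stable under specialisation (a relation $\ell(e_1)^{n_1} = \ell(e_2)^{n_2}$ with $n_1,n_2 \in \bb{Z}_{>0}$ pulls back through any localisation), so $Z$ is closed in $S$. By \ref{prop:equivalence} applied to $C_K \to S_K$, the N\'eron model hypothesis forces $Z \cap S_K = \emptyset$. Since $S$ is noetherian and proper over $\Lambda$, every irreducible component of $Z$ is vertical, and $Z$ is contained in the fibres of $S \to \Lambda$ over a finite set $\Sigma \subset \Lambda$. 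For any closed $t \in \Lambda_L$ with $\bar{x}(t) \notin Z$, alignment holds on an open neighbourhood of $\bar{x}(t)$, hence so does existence of a N\'eron model (by the main theorem of \cite{Holmes2014Neron-models-an}), and \ref{cor:NM_implies_no_jump} applied to such a neighbourhood yields $j_t = 0$. Thus $j(\sigma,\tau,\bar{x}) = \sum_t j_t [t]$ is supported on closed points of $\Lambda_L$ lying above $\Sigma$.

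Next I would bound $|j_t|$ linearly in $e_t$ at each such $t$. Let $s = \bar{x}(t) \in Z$ (a closed point of $S$), and let $z_1,\ldots,z_r$ be local equations in $\ca{O}_{S,s}$ of the components of $S \setminus U$ through $s$; since $Z \subseteq S \setminus U$, one of these, say $z_{i_0}$, is a uniformiser of $\Lambda$ at $\bar{t}$ up to unit. Set $\alpha_i = \on{ord}_t \bar{x}^\# z_i$, so $\alpha_{i_0} = e_t$. The formula \ref{formula_jump_green} expresses $j_t$ as a specific $\bb{Q}$-linear combination of Green's functions on $\Gamma_s$ evaluated at the $\alpha_i$. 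By \cite[Proposition 6.6]{David-Holmes2014Neron-models-an} this extends to a continuous $1$-homogeneous function of $(\alpha_1,\ldots,\alpha_r) \in \bb{R}_{\geq 0}^r$. Setting $\alpha_{i_0} = 0$ corresponds to contracting precisely the purely vertical edges of $\Gamma_s$ and recovers the jump formula for $C_K/S_K$ at the generisation of $s$ in $S_K$; this vanishes by alignment on $S_K$ together with \ref{cor:NM_implies_no_jump}. Since $j_t$ is rational in the $\alpha_i$ with denominators positive throughout $\bb{R}_{\geq 0}^r$, it is divisible by $\alpha_{i_0}$, and $j_t/\alpha_{i_0}$ extends to a continuous $0$-homogeneous function on $\bb{R}_{\geq 0}^r$. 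Such a function descends to the compact projectivised simplex and is bounded by some $C_s$ depending only on the combinatorial data at $s$. Noetherianness of $S$ produces only finitely many combinatorial types arising in $Z$, giving a uniform constant $C$ with $|j_t| \leq C \cdot e_t$ for every relevant $t$.

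To conclude, with the arithmetic-degree normalisation $|D| = \sum_t |D_t|\log \#\kappa(t)$ for $\bb{Q}$-divisors on $\Lambda_L$, the fundamental equation $\sum_{t \mid \bar{t}} e_t f_t = [L:K]$ for each $\bar{t} \in \Sigma$ gives
\begin{equation*}
|j(\sigma,\tau,\bar{x})| \leq C \sum_{\bar{t} \in \Sigma} \log p_{\bar{t}} \sum_{t \mid \bar{t}} e_t f_t = C \cdot [L:K] \cdot \sum_{\bar{t} \in \Sigma} \log p_{\bar{t}},
\end{equation*}
so $B := C \sum_{\bar{t} \in \Sigma} \log p_{\bar{t}}$ suffices.

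The main obstacle is the Green's function estimate in the second paragraph: controlling $j_t/\alpha_{i_0}$ uniformly requires the asymptotics of Green's functions as some of the edge labels tend to infinity while $\alpha_{i_0}$ stays bounded. The key ingredients are the vanishing of $j_t$ on $\{\alpha_{i_0} = 0\}$ (which reflects alignment of $C_K/S_K$) together with the rational-continuous structure of Green's functions, which should guarantee that $j_t/\alpha_{i_0}$ has no pole on $\bb{R}_{\geq 0}^r$ and extends continuously to the compactification. A secondary subtlety is carefully matching the dual graph of $C/S$ at $s$ (with its full vertical-and-horizontal boundary structure) to that of $C_K/S_K$ at a generisation of $s$, so that the formal substitution $\alpha_{i_0} = 0$ genuinely recovers the jump in the family $C_K/S_K$.
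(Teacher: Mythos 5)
Your first paragraph (localising the support of the jump to a finite set $\Sigma$ of primes of $\Lambda$) is essentially correct and runs parallel to Step 1 of the paper's proof, though the paper phrases it via finite presentation over $\Lambda^0$ rather than via closedness of the non-alignment locus.

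The second paragraph has a genuine gap. You claim that at a point $s=\bar{x}(t)$ there is a single local equation $z_{i_0}$, a uniformiser of $\Lambda$ up to unit, such that $j_t$ vanishes on the hyperplane $\{\alpha_{i_0}=0\}$ and hence is divisible by $\alpha_{i_0}$. Neither claim holds in general. First, the closed fibre $S_{\bar{t}}$ can be reducible or non-reduced at $s$ (e.g.\ $\ca{O}_{S,s}=\bb{Z}_p[[x,y]]$ with $p=xy$), so there may be several vertical prime divisors $z_{j_1},\ldots,z_{j_k}$ of $S\setminus U$ through $s$, with $e_t=\sum_m a_{j_m}\alpha_{j_m}$ rather than $e_t = \alpha_{i_0}$ for a single index. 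Second, and more seriously, the substitution $\alpha_{i_0}=0$ in the Green's function expression does \emph{not} recover the jump formula for $C_K/S_K$: it only contracts edges whose label is a pure power of $z_{i_0}$, leaving the other vertical contributions intact. The vanishing locus of $j_t$ guaranteed by alignment on $S_K$ is the smaller linear subspace where \emph{all} vertical $\alpha_j$ are zero, not the codimension-one set $\{\alpha_{i_0}=0\}$. So $j_t$ need not be divisible by $\alpha_{i_0}$, and $j_t/\alpha_{i_0}$ need not extend continuously to the projectivised simplex (the horizontal coordinates are unbounded relative to $\alpha_{i_0}$).

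The paper avoids this by splitting the bound into two parts. It records that the normalised vertical orders $\on{ord}_{\Lambda_x}x^*Z_i/[L_x:K]$ are bounded for vertical $Z_i$, applies \cite[Proposition~6.8]{David-Holmes2014Neron-models-an} to bound the difference between the full Green's-function expression and a truncation involving only the horizontal divisors $Z_1,\ldots,Z_r$, and then shows the truncated quantity $B(x)$ is \emph{identically zero} using alignment of $C_K/S_K$ (via a three-step reduction: first the 2-vertex-connected case by homogeneity, then divisors of the form $u-v$ by additivity along a tree, then the general case by positive semi-definiteness and Cauchy--Schwarz). The exact vanishing of the horizontal part is essential, precisely because the horizontal $\alpha_i$ are not controlled by $e_t$; a merely bounded horizontal contribution would not suffice. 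To repair your argument you would need to replace the divisibility-by-$\alpha_{i_0}$ claim with an estimate valid on the cone cut out by the constraints $\alpha_j \le c_j e_t$ (for vertical $j$) and the additional vanishing of $B(x)$, at which point you would be reproducing the structure of the paper's Step~4.
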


\begin{proof}
We proceed in several steps. 
\begin{enumerate}
\item[Step 1:] translate the problem into a statement which is local on $\Lambda$. 

Recall from \cite{Holmes2014Neron-models-an} that the existence of a N\'eron model of the jacobian is equivalent to alignment of the family of curves. Using that $C_K/S_K$ is aligned and the finite presentation of $C/S$, we see that there is a nonempty Zariski open $\Lambda^0 \hra \Lambda$ such that the jacobian of $C_{U_{\Lambda^0}}/U_{\Lambda^0}$ admits a N\'eron model over $S_{\Lambda^0}$. Then by \ref{cor:NM_implies_no_jump} we deduce that there is a finite set of primes of $\Lambda$ such that for every (finite flat quasi-)section, the height jump vanishes outside that finite set. Because the claim is stable under unramified base-change on $\Lambda$ we may replace $\Lambda$ by its strict henselisation and completion $\Lambda_\frak{p}$ at some prime $\frak{p}$. Write $K_\frak{p}$ for the field of rational functions on $\Lambda_\frak{p}$. 

\item[Step 2:] translate the problem into a statement which is local on $S$. 

Let $S' \ra S$ be an \'etale cover, and let $L/K_\frak{p}$ be a finite extension. Write $\Lambda_L$ for the normalisation of $\Lambda_\frak{p}$ in $L$. Then $\Lambda_L$ is finite over $\Lambda_\frak{p}$ of degree $[L:K]$, and is strictly henselian. By properness of $S/\Lambda$ the restriction map $S(\Lambda_L) \ra S(L)$ is a bijection, and since $\Lambda_L$ is strictly henselian we know that $S'(\Lambda_L) \ra S(\Lambda_L)$ is surjective. By quasi-compactness of $S$, it therefore suffices to bound the height jump on every connected component of some \'etale cover of $S$. 


\item[Step 3:] make some reductions using the \'etale local nature of the statement.

To keep the notation concise we will write $\Lambda$ in place  of $\Lambda_\frak{p}$, and (using step 2 and \cite[lemma 6.3]{Holmes2014A-Neron-model-o}) we will replace $S$ by a flat finite-type integral $\Lambda$-scheme such that for some $s \in S$ (which we will refer to as a `controlling point'), the induced specialisation map $\Gamma_s \ra \Gamma_t$ is surjective for all points $t \in S$.



Since $S$ is noetherian, it suffices to consider quasi-sections in $S(\Lambda_L)$ (with $L/K$ finite) such that the induced graph over the closed point of $\Lambda$ is the same as the graph of the controlling point $s$. If $Z_1, \ldots, Z_n$ are prime Weil divisors on $S$ such that every label on $\Gamma$ can be written in terms of the $Z_i$ (and the $Z_i$ are minimal with respect to this) then we can restrict further to quasi-sections $x\in S(\Lambda_L)$ such that for all $i$, the divisor $x^*Z_i$ on $\Lambda$ is non-trivial. Write $\Sigma$ for the set of such quasi-sections. If $x\colon \Lambda_L \ra S$ is a quasi-section, we write $L = L_x$ and $\Lambda_x = \Lambda_L$. 

\item[Step 4:] conclude the argument by an appeal to a theorem about resistive networks. 

We are now in the situation of \ref{sec:computing_jump}, and we adopt the notation of that section. Further, we know that $C_K/S_K$ is aligned, where we write $K$ for the generic point of $\Lambda$. Some of the $Z_i$ may be trivial after pullback to $S_K$, and some may not. Re-ordering we assume that $Z_1, \cdots, Z_r$ are not trivial on $S_K$ and $Z_{r+1}, \cdots, Z_n$ are. As such, for each $r+1 \le i\le n$ the function 
\begin{equation*}
\Sigma \ra \bb{R}_{>0}; \;\; x \mapsto \frac{\on{ord}_{\Lambda_x} x^*Z_i}{[L_x:K]}
\end{equation*}
is bounded. 

Define divisors on $C/S$ by $D = \sum_i d_i\sigma_i$, $E = \sum_i e_i \tau_i$, so $\sigma = [D_U]$ and $\tau = [E_U]$. Write $\ca{D}$, $\ca{E}$ for the combinatorial divisors associated to $D$ and $E$. Then applying \cite[proposition 6.8]{David-Holmes2014Neron-models-an} we find that 
\begin{equation*}
\left|
\on{gr}\left( \Gamma, \frac{\on{ord}_{\Lambda_x} x^\#\ell_{}}{[L_x:K]}; \ca{D}, \ca{E}  \right)
-
\on{gr}\left( \Gamma, \frac{\on{ord}_{\Lambda_x} x^\#\ell_{\{1, \ldots, r\}}}{[L_x:K]}; \ca{D}, \ca{E}  \right)
- 
\sum_{i=1}^r \on{gr}\left( \Gamma, \frac{\on{ord}_{\Lambda_x} x^\#\ell_{i}}{[L_x:K]}; \ca{D}, \ca{E}  \right)
\right|
\end{equation*}
is bounded independent of $x\in\Sigma$. To complete the proof it suffices to bound the absolute value of 
\begin{equation}\label{eq:to_bound}
B(x)\coloneqq \on{gr}\left( \Gamma, \frac{\on{ord}_{\Lambda_x} x^\#\ell_{\{1, \ldots, r\}}}{[L_x:K]}; \ca{D}, \ca{E}  \right)
- 
\sum_{i=1}^r \on{gr}\left( \Gamma, \frac{\on{ord}_{\Lambda_x} x^\#\ell_{i}}{[L_x:K]}; \ca{D}, \ca{E}  \right)
\end{equation}
for which we will use that $C_K/S_K$ is aligned. In fact, we will show that $B(x) = 0$ for all $x \in \Sigma$. 


Let $\Gamma_K$ be the graph obtained from $\Gamma$ by contracting every edge whose label does not contain at least one of $Z_1, \cdots, Z_r$ with non-zero (i.e. positive) coefficient. It is clear that all terms in \ref{eq:to_bound} can be computed on $\Gamma_K$ just as well as on $\Gamma$. Moreover, $\Gamma_K$ is the graph of $C$ over a controlling point of $S_K$, so we can see easily what alignment of $C_K/S_K$ means on $\Gamma_K$; namely, that all labels on edges in 2-vertex-connected components are multiplicatively related over $S_K$. Define a labelling $\ell'$ on the edges of $\Gamma_K$ by composing $\ell$ with the map sending $\sum_{i=1}^n a_i Z_i$ to $\sum_{i=1}^r a_i Z_i$. 

We will show $B(x) = 0$ in three steps:
\begin{itemize}
\item[Step 4.1:] the case where $\Gamma_K$ is 2-vertex connected. 

By alignment there exists a divisor $\delta =\sum_{i=1}^r a_i Z_i$ and for each edge $e$ of $\Gamma_K$ a constant $\lambda_e \in \bb{Q}_{\ge 0}$ such that $\ell'(e) = \lambda_e \delta $. Let $\rho\colon \bb{R}_{\ge 0}^{\on{Edges}\Gamma_K} \ra \bb{R}_{\ge 0}$ be the map sending a labelling $\mu$ to the Green's function $\on{gr}(\Gamma_K, \mu;\ca{D},\ca{E})$. This function is homogenous of degree 1 by \cite[proposition 6.7(a)]{David-Holmes2014Neron-models-an}. For $1 \le i \le r$ define 
\begin{equation*}
g_i\colon \Sigma \ra \bb{R}_{\ge 0}^{\on{Edges}\Gamma_K}; \;\; x \mapsto \frac{\on{ord}_{\Lambda_x} x^\#\ell_i}{[L_x:K]} 
\end{equation*}
and define $g = \sum_{i=1}^rg_i$. Then by definition we have
\begin{equation*}
\on{gr}\left( \Gamma_K, \frac{\on{ord}_{\Lambda_x} x^\#\ell_{\{1, \ldots, r\}}}{[L_x:K]}; \ca{D}, \ca{E}  \right) = \rho \circ g
\end{equation*}
and 
\begin{equation*}
\on{gr}\left( \Gamma_K, \frac{\on{ord}_{\Lambda_x} x^\#\ell_{i}}{[L_x:K]}; \ca{D}, \ca{E}  \right) = \rho \circ g_i
\end{equation*}
for $1 \le i \le r$. Define
\begin{equation*}
f\colon \bb{R}_{\ge 0} \ra \bb{R}_{\ge 0}^{\on{Edges}\Gamma_K}; \;\; t \mapsto (\lambda_e t)_{e \in \on{Edges}\Gamma_K}
\end{equation*}
and define $\rho_0 \coloneqq \rho \circ f\colon \bb{R}_{\ge 0} \ra \bb{R}_{\ge 0}$, which is a `linear' map since $\rho$ is homogeneous of weight 1 and the source has dimension 1. Setting 
\begin{equation*}
h_i\colon \Sigma \ra \bb{R}_{\ge 0}; \;\; x \mapsto a_i \frac{\on{ord}_{\Lambda_x} x^\#Z_i}{[L_x:K]}
\end{equation*}
and $h = \sum_{i=1}^r h_i$, we find $g_i = f \circ h_i$ and $g = f \circ h$. Thus $\rho \circ g = \rho_0 \circ h$, and $\rho \circ g_i = \rho_0 \circ h_i$ for each $i$. Then 
\begin{equation*}
\begin{split}
B & = \rho_0 \circ h - \sum_{i=1}^r\rho_0 \circ h_i\\
& = \rho_0 \circ \left( h - \sum_{i=1}^r h_i\right)\;\;\;\;\;\;\;\;\;\;\;\text{(by linearity of }\rho_0)\\
&=  \rho_0 \circ (\text{zero map}) = (\text{zero map}). 
\end{split}
\end{equation*}

\item [Step 4.2:] the case where $\ca{D} = \ca{E} = u-v$ for some vertices $u$ and $v$. 

We may assume $\Gamma_K$ has no self-loops, since they do not contribute to the Green's functions. Write $\Gamma'$ for the graph obtained from $\Gamma_K$ by contracting to a point every 2-vertex-connected component which is not a single edge (so $\Gamma'$ is a tree). Thus there is a unique path $\gamma$ from the image of $u$ to the image of $v$ in $\Gamma'$. 

Let $H_1, \ldots, H_n$ be the vertices appearing along $\gamma$, so the image of $u$ lies in $H_1$ and the image of $u$ lies in $H_n$. Let $b_1, \ldots, b_{n-1}$ be the edges along $\gamma$. Each $H_i$ corresponds to either a vertex or a 2-vertex-connected subgraph of $\Gamma_K$, which we will denote by the same symbol $H_i$. For $1 \le i \le n-1$ let $t_i \in H_i$ be the vertex which the edge of $\gamma$ out of $H_i$ lifts to, and for $2 \le i \le n$ let $s_i\in H_i$ be the vertex of $H_i$ where the edge of $\gamma$ into $H_i$ lifts to. The result might look something like this (if $n=4$):

\includegraphics[scale=1.5]{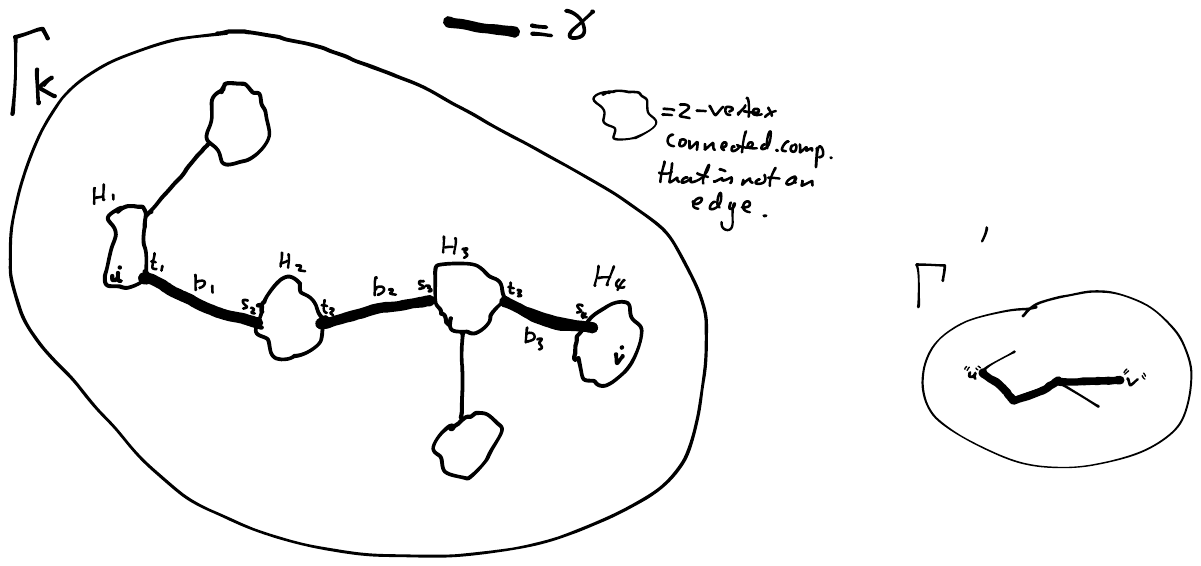}

Then by additivity of Green's functions in trees we find that 
\begin{equation}\label{eq:vanish_two_points}
B(x) = B_x(H_1; u, t_1) + \sum_{i=2}^{n-1}B_x(H_i; s_i, t_i) + \sum_{i=1}^{n-1}B_x(b_i; t_i, s_i) + B_x(H_n; t_n, v). 
\end{equation}
where for a subgraph $G\sub \Gamma_K$ and two vertices $a$ and $b$ we write
\begin{equation*}
\begin{split}
& B_x(G;a,b) \coloneqq\\
& \on{gr}\left( G, \frac{\on{ord}_{\Lambda_x} x^\#\ell_{\{1, \ldots, r\}}}{[L_x:K]}|_G; a-b, a-b  \right)
- 
\sum_{i=1}^r \on{gr}\left( G, \frac{\on{ord}_{\Lambda_x} x^\#\ell_{i}}{[L_x:K]}|_G; a-b, a-b  \right)
\end{split}
\end{equation*}
But all the terms on the right hand side of \ref{eq:vanish_two_points} vanish by step 4.1, so we are done. 

\item[Step 4.3:] the general case. 

To deduce the general case from the case where $\ca{D} = \ca{E} = u-v$, we fix $x$ and consider $B$ as a bilinear function on the space $\on{Div}$ of combinatorial divisors of degree zero on $\Gamma_K$. Then $B$ is positive semi-definite by \cite[corollary 6.7(b)]{David-Holmes2014Neron-models-an} and it vanishes on a basis of $\on{Div}$ by step 4.2, so by the Cauchy-Schwarz inequality we find that $B$ is zero. 
\end{itemize}
\end{enumerate}
\end{proof}

\subsection{Heights and rational maps}\label{sec:heights_and_rat}
Let $S/\qq$ be a projective scheme, and $\ca{L}$ on $S$ a line bundle. To this data we can attach a Weil height
\begin{equation}
\on{h}_\ca{L}\colon S(\bar{\qq}) \ra \bb{R}
\end{equation}
which is unique up to $O(1)$. For example, this can be done via Arakelov theory, or by writing $\ca{L}$ as a difference of very ample line bundles and applying the usual height machinery on a variety embedded in projective space. In either case a number of choices must be made, but the resulting heights all differ by bounded amounts. We say $\on{h}_\ca{L}$ (or $\ca{L}$) is \emph{weakly non-degenerate} if the sets of points of bounded height with residue fields of bounded degree are not Zariski dense in $S$. For example, Northcott's theorem tells us that if $\ca{L}$ is ample then sets of points of bounded height and degree for $\on{h}_\ca{L}$ are finite, so certainly not Zariski dense if $S$ has positive dimension. The main goal of this section is to show that if $\on{h}^0(S, \ca{L}) \ge 2$, then $\on{h}_\ca{L}$ is weakly non-degenerate. We will start by developing a bit of theory about heights associated to rational maps. Since we are ultimately interested in weak non-degeneracy, we will work up to $O(1)$ everywhere. 

In the above setup, let $V$ be a non-zero sub-$\qq$-vector space of $\on{H}^0(S, \ca{L})$. In the usual way we obtain a rational map 
\begin{equation*}
f_V\colon S \dashrightarrow \bb{P}(V)
\end{equation*}
(we think of $\bb{P}(V)$ as the space of rank-1 quotients of $V$, to avoid having duals everywhere). Let $U$ be an open subset of its domain of definition. By composing the map $f_V$ with a standard height on $\bb{P}(V)$ (say after choosing a basis of $V$) we get a height
\begin{equation*}
\on{h}_V\colon U(\bar{\qq}) \ra\bb{R}. 
\end{equation*}
For example, the reader will easily verify that if $V \sub V' \sub \on{H}^0(S, \ca{L})$ then $f_{V'}$ is defined on $U$ and we have $\on{h}_V \le \on{h}_{V'}$ on $U(\bar{\qq})$ (up to $O(1)$). It is slightly harder to compare $\on{h}_{\ca{L}}$ with $\on{h}_V$, but we have
\begin{lemma}\label{lem:rational_map_height_inequality}
On $U(\bar{\qq})$ we have $\on{h}_V \le \on{h}_\ca{L} $ up to $O(1)$. 
\end{lemma}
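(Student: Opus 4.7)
The plan is to reduce to the case of a globally generated (in fact very ample) line bundle, where the Weil height coincides up to $O(1)$ with the projective height coming from the complete linear system, and then to compare $\on{h}_V$ to this using a Segre-embedding computation.

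First, since $S$ is projective, I would pick a very ample line bundle $\ca{M}$ on $S$ such that $\ca{N} := \ca{L} \otimes \ca{M}$ is also very ample (take any ample $\ca{H}$ and set $\ca{M} = \ca{H}^{\otimes k}$ for $k$ large). By the standard calculus of Weil heights applied to the decomposition $\ca{L} = \ca{N} \otimes \ca{M}^{-1}$ we then have $\on{h}_\ca{L} = \on{h}_\ca{N} - \on{h}_\ca{M} + O(1)$. Now let $W \subseteq \on{H}^0(S, \ca{N})$ denote the image of the multiplication map $V \otimes \on{H}^0(S, \ca{M}) \to \on{H}^0(S, \ca{N})$. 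Since $\ca{M}$ is base-point-free, one checks that the base locus of $W$ is contained in the base locus of $V$, so on the open $U$ the subspace $W$ defines a morphism $g \colon U \to \bb{P}(W)$ which factors the product $(f_V, \phi_\ca{M}) \colon U \to \bb{P}(V) \times \bb{P}(\on{H}^0(S,\ca{M}))$ through the Segre embedding.

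By the additivity of heights under the Segre embedding, $\on{h}_W = \on{h}_V + \on{h}_\ca{M} + O(1)$ on $U(\bar{\qq})$. On the other hand, $W \subseteq \on{H}^0(S, \ca{N})$ and $\ca{N}$ is very ample, so the complete linear system defines a closed immersion whose pullback height agrees with $\on{h}_\ca{N}$ up to $O(1)$; composing with the linear projection $\bb{P}(\on{H}^0(S, \ca{N})) \dashrightarrow \bb{P}(W)$ (and applying the standard coordinate-wise estimate through a surjective linear map, cf.\ Hindry--Silverman, B.2.5) gives $\on{h}_W \le \on{h}_\ca{N} + O(1)$ on $U(\bar{\qq})$. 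Combining the two yields $\on{h}_V + \on{h}_\ca{M} \le \on{h}_\ca{N} + O(1)$, from which $\on{h}_V \le \on{h}_\ca{L} + O(1)$ follows immediately.

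The main item to verify carefully is the Segre additivity together with the bound through the linear projection: both are classical but require bookkeeping of local absolute values at all places of a number field of definition, as in the basic height machinery. The only other subtlety is checking that $g$ is defined on all of $U$, which however is automatic from the very ampleness (in particular, base-point-freeness) of $\ca{M}$.
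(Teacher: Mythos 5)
Your proof is correct, and it takes a genuinely different route from the paper. The paper works with arithmetic models: it extends $\ca{L}$ to a line bundle $\cl{L}$ on a proper flat model $\ca{S}$ over $\ca{O}_K$, forms the ideal sheaf $I_V = \cl{L}_V \otimes \cl{L}^\vee$, blows up $\ca{S}$ along $I_V$ to resolve the rational map, and compares arithmetic degrees $\widehat{\on{deg}}$ of pullbacks of line bundles along $\ca{O}_K$-points, with the crucial inequality coming from $I_V \subseteq \ca{O}_\ca{S}$. Your argument stays entirely within the classical Weil height machinery over $\bar{\qq}$: you tensor with a very ample $\ca{M}$ so that $\ca{N} = \ca{L} \otimes \ca{M}$ is very ample, identify the height of the subsystem $W = \on{image}(V \otimes \on{H}^0(\ca{M}) \to \on{H}^0(\ca{N}))$ as $\on{h}_V + \on{h}_\ca{M} + O(1)$ via the Segre embedding and a linear embedding $\bb{P}(W) \hookrightarrow \bb{P}(V\otimes \on{H}^0(\ca{M}))$, and then bound $\on{h}_W \le \on{h}_\ca{N} + O(1)$ by the coordinate-projection estimate, since $\ca{N}$ very ample makes $\on{h}_\ca{N}$ the pullback height from the complete system. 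The two ingredients you flag (Segre additivity and the height decrease under linear projection, valid over any product-formula field, hence also in the $\bb{F}_q(T)$ case the paper allows) are indeed the only nontrivial facts needed. What your route buys is that one never has to choose an integral model of $S$ or invoke the commutation of $\pi^{-1}I_V$ with pullback along sections (the paper's step (2), which cites Liu \S 8.1.5); what the paper's route buys is that it meshes directly with the Arakelov formalism ($\widehat{\on{deg}}$ of metrized line bundles on models) which it uses throughout Section 3.7, so the two lemmas can be combined without re-translating between height conventions.

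One small point worth making explicit in your writeup: the factorization of $g$ through the Segre embedding should be phrased via the surjection $V \otimes \on{H}^0(\ca{M}) \twoheadrightarrow W$, which (in the paper's rank-one-quotient convention for $\bb{P}(\cdot)$) induces a closed linear embedding $\bb{P}(W) \hookrightarrow \bb{P}(V \otimes \on{H}^0(\ca{M}))$; it is the composite of $g$ with this embedding that coincides with $\on{Segre} \circ (f_V, \phi_\ca{M})$ on $U$, and linear embeddings preserve standard projective heights up to $O(1)$. This is implicit in what you wrote and is fine, but spelling it out removes any ambiguity about which space the Segre additivity is being applied in.
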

Note that $\on{h}_\ca{L}$ is not in general equal to $\on{h}_{\ca{L}(S)}$. 
\begin{proof}
Let $\ca{S}$ be a proper flat reduced model of $S$ over $\ca{O}_K$ such that $\ca{L}$ extends to a line bundle on $\ca{S}$; choose such an extension and denote it $\cl{L}$. Let $\ca{L}_V$ be the sub-$\ca{O}_{S}$-module of $\ca{L}$ generated by $V$, and let $\cl{L}_V$ be an extension to a coherent submodule of $\cl{L}$. Let 
\begin{equation*}
I_V \coloneqq \cl{L}_V \otimes_{\ca{O}_\ca{S}}\cl{L}^\vee \ra \ca{O}_{\ca{S}}, 
\end{equation*}
 a coherent sheaf of ideals in $\ca{O}_{\ca{S}}$. Let $\pi\colon \tilde{\ca{S}} \ra \ca{S}$ be the blowup of $\ca{S}$ in $I_V$. For a point $p \in U(K)$, we write $\bar{p}$ for the corresponding $\ca{O}_K$-point of $\ca{S}$, and $\tilde{p}$ for the corresponding $\ca{O}_K$-point of $\tilde{\ca{S}}$. Write $f = f_V\colon U \ra \bb{P}(V)$, and define $\tilde{f} \colon \tilde{\ca{S}}_K \ra \bb{P}(V)$ to be the unique map extending $f$. Then we have
 \begin{equation*}
 \begin{split}
h(f(p)) & = h(\tilde{f}(p))\\
& \stackrel{(1)}{=}\widehat{\on{deg}} \left(\tilde{p}^*(\pi^{-1}I_v \otimes_{\ca{O}_\ca{S}}\cl{L})\right)\\
& \stackrel{(2)}{=}\widehat{\on{deg}} \left(\bar{p}^*(I_v \otimes_{\ca{O}_\ca{S}}\cl{L})\right)\\
& \stackrel{(3)}{\le}\widehat{\on{deg}} \left(\bar{p}^*\cl{L}\right). \\
 \end{split}
 \end{equation*}
 See for example \cite[\S 1.2]{Szpiro1985Degres-intersec} or \cite[IV, \S 3]{Lang1988Introduction-to} for the definition of the arithmetic degree $\widehat{\on{deg}}$; note that in the function field case it is just the usual degree of a line bundle on a proper smooth curve. 
 We now justify the above (in)equalities:
 \begin{enumerate}
 \item[(1)] holds up to $O(1)$; it follows from the fact that on $\tilde{\ca{S}}_K$ we have 
$\left(\pi^{-1}I_v \otimes \pi^*\cl{L}\right)|_{\tilde{\ca{S}}_K} = \tilde{f}^*\ca{O}(1)$, and $h(\tilde{f}(p))$ is equal (up to $O(1)$) to the arithmetic degree of $\tilde{p}^*$ of any invertible model of $\tilde{f}^*\ca{O}(1)$; 
 \item[(2)] is where the content lies; because $p$ is in $U$ we know that $I_V$ pulls back along $\bar{p}$ to a submodule of the line bundle $\bar{p}^*\cl{L}$ which is invertible on the generic point of $\ca{O}_K$, and hence is itself a line bundle. Then from the proof of \cite[8.1.5]{Liu2002Algebraic-geome} we find that $\bar{p}^*I_V = \tilde{p}^*\pi^{-1}I_V$, from which the result follows;
 \item[(3)] is because $I_V \sub \ca{O}_{\ca{S}}$, so $\bar{p}^*I_v \otimes_{\ca{O}_\ca{S}}\cl{L} \sub \bar{p}^*\cl{L}$. 
 \end{enumerate}
 Now up to $O(1)$ we have that $h_V(p) = h(f(p))$ and $h_\ca{L}(p) = \widehat{\on{deg}} \left(\bar{p}^*\cl{L}\right)$ and we are done. 
 \end{proof}

\begin{corollary}\label{lem:very_effective_implies_weakly_non_degen}
Let $S/\qq$ be a connected projective scheme, and $\ca{L}$ on $S$ a line bundle with $h^0(S, \ca{L}^{\otimes n}) \ge 2$ for some $n >0$. Then $\ca{L}$ is weakly non-degenerate. 
%
\end{corollary}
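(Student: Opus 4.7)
The overall plan is to reduce to the case $h^0(S,\ca{L}) \ge 2$, use two linearly independent sections to produce a non-constant rational map to $\bb{P}^1$, and combine \ref{lem:rational_map_height_inequality} with Northcott's theorem on $\bb{P}^1$. First I would exploit that $h_{\ca{L}^{\otimes n}} = n \cdot h_\ca{L} + O(1)$: weak non-degeneracy of $\ca{L}^{\otimes n}$ implies that of $\ca{L}$, since the bounded-$h_\ca{L}$ bounded-degree sets and the bounded-$h_{\ca{L}^{\otimes n}}$ bounded-degree sets coincide up to rescaling the height bound. So we may replace $\ca{L}$ by $\ca{L}^{\otimes n}$ and assume $h^0(S, \ca{L}) \ge 2$ from the start.

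Next I would pick a two-dimensional subspace $V \sub \on{H}^0(S, \ca{L})$ with basis $s_0, s_1$ and consider the induced morphism $f_V \colon U \to \bb{P}(V) \cong \bb{P}^1$, where $U\sub S$ is the complement of the common zero locus of $s_0$ and $s_1$, a dense open subscheme. Since $s_0$ and $s_1$ are linearly independent the rational function $s_0/s_1$ on $S$ is non-constant (using irreducibility of $S$), so $f_V$ is non-constant. By \ref{lem:rational_map_height_inequality} we have $h_V \le h_\ca{L} + O(1)$ on $U(\bar{\qq})$.

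Finally, for constants $C, d > 0$, Northcott's theorem on $\bb{P}^1$ says that the set $\Sigma \sub \bb{P}^1(\bar{\qq})$ of points of height at most $C+O(1)$ and residue degree at most $d$ is finite. The points of $U(\bar{\qq})$ with $h_\ca{L}$-height at most $C$ and residue degree at most $d$ then lie in $f_V^{-1}(\Sigma)$, a finite union of fibres of the non-constant morphism $f_V\colon U \to \bb{P}^1$. Each such fibre is a proper closed subset of $U$, hence (taking Zariski closure in $S$) a proper closed subset of $S$; together with the proper closed subset $S \setminus U$ this contains all bounded-height bounded-degree points in a proper closed subset of $S$, so the set is not Zariski dense. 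The only delicate point is the non-constancy of $f_V$; this is immediate for irreducible $S$, as is the case in the main application where $S$ is a projective variety. The reducible case can be handled by finding an irreducible component on which the restricted map is non-constant, but the above suffices for what we need.
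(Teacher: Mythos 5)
Your proposal is correct and follows essentially the same route as the paper: reduce to $n=1$, produce a non-constant rational map to $\bb{P}^1$ from two independent sections, and combine \ref{lem:rational_map_height_inequality} with Northcott's theorem on $\bb{P}^1$. You simply spell out the steps (e.g.\ passing to a two-dimensional subspace, pulling back a finite set to a finite union of proper closed fibres) that the paper compresses into one sentence.
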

\begin{proof}
We may assume $n=1$. In the above notation, let $V = \on{H}^0(S, \ca{L})$. Then we can find a dominant rational map $f_V\colon S \dashrightarrow \bb{P}^1$ which is defined on some dense open $U \sub S$. From the Northcott property for $\bb{P}^1$ we see that $\on{h}_V$ is weakly non-degenerate, and the result then follows from \ref{lem:rational_map_height_inequality}. 
\end{proof}

\subsection{Proof of the second main \ref{thm:main_sparsity}}
\label{sec:proof_of_second_main}

\begin{remark}\label{rem:bounded_differences}
In the proof of \ref{thm:main_sparsity} we want to talk about two metrics on a line bundle having `bounded difference'. If $X$ is a finite-type scheme over $\bb{C}$ and ${L}$ is a line bundle on $X$ with metrics $\aabs{-}_1$ and $\aabs{-}_2$, we say $\aabs{-}_1$ and $\aabs{-}_2$ have \emph{bounded difference} if there exists an open Zariski cover $U_i$ of $X$ and generating sections $\ell_i \in {L}(U_i)$ such that the function
\begin{equation*}
\Big|\aabs{\ell_i}_1 - \aabs{\ell_i}_2\Big|
\end{equation*}
is bounded on $U_i$ for every $i$. 

Now let $\ca{X}$ be a proper scheme over $\Lambda$ and $\ca{L}$ a line bundle on $\ca{X}$. Let $X = \ca{X}_\bb{C}$ and let $L = \ca{L}_\bb{C}$. To any metric $\aabs{-}$ on $L$ we associate as usual a height function 
\begin{equation*}
\on{h}_{\ca{L}, \aabs{-}}\colon \ca{X}(\bar{K}) \ra \bb{R}. 
\end{equation*}
If $\aabs{-}_1$ and $\aabs{-}_2$ are two metrics on $L$ having bounded difference in the above sense, then one checks without difficulty that the functions $\on{h}_{\ca{L}, \aabs{-}_1}$ and $\on{h}_{\ca{L}, \aabs{-}_2}$ have bounded difference. 
\end{remark}

\begin{definition}
Let $S/\qq$ be a smooth projective connected scheme of dimension $d$. We say a line bundle $L$ on $S$ is \emph{ample-positive} if for every ample $H$ on $S$, we have that $L\cdot H^{d-1} > 0$. 
\end{definition}

\begin{theorem}\label{thm:main_sparsity}
Let $\ca{S}$ be a regular scheme, projective and flat over $\Lambda$. Let $C/\ca{S}$ be a nodal curve, smooth over a dense open of $\ca{S}$ and with $C_{\ca{S}_K}$ regular. Assume:
\begin{enumerate}
\item $C_{\ca{S}_K}/\ca{S}_K$ is aligned;
\item
for every ample-positive $L$ on $\ca{S}_K$ there exists $n >0$ such that $h^0(\ca{S}_K, L^{\otimes n}) \ge 2$. 
\end{enumerate}
Let $S \sub \ca{S}_K$ be any open over which $C$ is smooth and write $J$ for the jacobian of $C_S/S$. Let $\sigma \in J(S)$ be a section of infinite order corresponding to a divisor supported on some sections of $C/\ca{S}_K$. Then given any $d \in \bb{Z}_{\ge 1}$ there exist $\epsilon \in\bb{R}_{>0}$ such that the set 
\begin{equation*}
{\tor}_\epsilon(d) = \{p\in S(\bar{\qq}) | [\kappa(p): \qq] \le d \text{ and }\hat{\on{h}}(\sigma(p)) \le \epsilon\}. 
\end{equation*}
is not Zariski dense in $S$. 
\end{theorem}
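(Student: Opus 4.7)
The plan is to implement the decomposition $\hat{\on{h}}(\sigma(p)) = \on{h}_\ca{L}(p) + j(\sigma,\sigma,\bar p)$ (up to $O(1)$) sketched in the introduction, where $\ca{L} = \Span{\sigma,\sigma}_a$ is the admissible extension of the Deligne self-pairing to $\ca{S}_K$ (which exists because assumption (1) gives a N\'eron model of $J_K$ over $\ca{S}_K$), $\on{h}_\ca{L}$ is a Weil height attached to any extension of $\ca{L}$ to a model over $\Lambda$, and $\bar p\colon \Lambda_{\kappa(p)} \to \ca{S}$ is the unique lift of $p$ provided by properness. The goal is to bound the jump uniformly (using assumption (1)) and show that $\on{h}_\ca{L}$ is weakly non-degenerate (using assumption (2)), so that small-height points for $\sigma$ lie in a set that is not Zariski dense.

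First I would bound the jump. Assumption (1) implies by the main result of \cite{Holmes2014Neron-models-an} that $J_K$ admits a N\'eron model over $\ca{S}_K$. After a possible modification of $\ca{S}$ to arrange that $C$ itself is regular (only the generic fibre regularity was assumed; at the finitely many remaining primes one resolves to get the hypothesis of \ref{thm:generic_alignment_bounds_jump}) one then applies \ref{thm:generic_alignment_bounds_jump} with $\tau = \sigma$ to produce a constant $B$ with
\begin{equation*}
\bigl|j(\sigma,\sigma,\bar p)\bigr| \;\le\; B \cdot [\kappa(p):K] \qquad \text{for all } p \in S(\bar K).
\end{equation*}

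Next I would prove that $\ca{L}$ is ample-positive on $\ca{S}_K$. Let $H$ be an ample line bundle on $\ca{S}_K$ and let $d = \dim \ca{S}_K$. By Bertini applied to a sufficiently divisible power of $H$, one finds a smooth projective curve $T \hookrightarrow \ca{S}_K$, obtained as the intersection of $d-1$ general members of a base-point-free linear system, whose class is proportional to $H^{d-1}$ and such that $T \cap S$ is dense in $T$. Genericity forces $\sigma|_T$ to have infinite order, because the torsion locus $\{p : \sigma(p) \text{ torsion of some fixed order}\}$ is a proper closed subscheme of $S$. On such a $T$, the theorem of Silverman--Tate in the form of \cite{David-Holmes2014Neron-models-an} (together with \ref{mostlytrivial} to compare the admissible pairing on $\ca{S}_K$ with that on $T$, using that the jump vanishes because $J_K$ has a N\'eron model over $\ca{S}_K$) gives $\deg_T(\ca{L}|_T) = 2\,\hat{\on{h}}(\sigma|_T) > 0$. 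Hence $\ca{L}\cdot H^{d-1} > 0$ and $\ca{L}$ is ample-positive.

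Now apply assumption (2) to obtain $n > 0$ with $h^0(\ca{S}_K, \ca{L}^{\otimes n}) \ge 2$. By \ref{lem:very_effective_implies_weakly_non_degen}, $\ca{L}$ is weakly non-degenerate. To conclude, fix $d \ge 1$ and pick any $\epsilon > 0$. If $p \in \tor_\epsilon(d)$ then
\begin{equation*}
\on{h}_\ca{L}(p) \;\le\; \hat{\on{h}}(\sigma(p)) + \bigl|j(\sigma,\sigma,\bar p)\bigr| + O(1) \;\le\; \epsilon + B d + O(1),
\end{equation*}
so $\tor_\epsilon(d)$ is contained in the set of $p \in \ca{S}_K(\bar K)$ with $[\kappa(p):K] \le d$ and $\on{h}_\ca{L}(p)$ bounded, which is not Zariski dense in $\ca{S}_K$ by weak non-degeneracy, hence not Zariski dense in the dense open $S \subseteq \ca{S}_K$.

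The step I expect to be the main obstacle is establishing ample-positivity of $\ca{L}$: one must make sure the Bertini-generic curve $T$ can be chosen so that (a) it avoids the codimension-2 locus where the admissible extension might fail to commute with restriction, (b) $\sigma|_T$ retains infinite order, and (c) the Silverman--Tate computation of $\deg_T(\ca{L}|_T)$ really produces a strictly positive number rather than just a non-negative one. A minor technical nuisance is the need to upgrade the hypothesis `$C_{\ca{S}_K}$ regular' to the full regularity of $C$ needed for \ref{thm:generic_alignment_bounds_jump}, but this is handled by a harmless modification of $\ca{S}$ that does not affect the statement.
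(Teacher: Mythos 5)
Your plan correctly identifies the decomposition, the role of \ref{thm:generic_alignment_bounds_jump} and of \ref{lem:very_effective_implies_weakly_non_degen}, and the two hypotheses, but there are two genuine gaps.

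The more serious one is hidden in the phrase ``up to $O(1)$'' attached to the decomposition $\hat{\on{h}}(\sigma(p)) = \on{h}_\ca{L}(p) + j(\sigma,\sigma,\bar p) + O(1)$, where you take $\on{h}_\ca{L}$ to be a naive Weil height for any metrised extension of $\ca{L}$. The exact identity (the paper's Step 4) is $\hat{\on{h}}_\sigma = \on{h}^\ca{P}_\ca{L} - j$, where $\on{h}^\ca{P}_\ca{L}$ uses the rigidified translation-invariant Poincar\'e metric at the Archimedean places. The weak non-degeneracy from \ref{lem:very_effective_implies_weakly_non_degen} applies to a height built from a \emph{continuous} metric, so you still must show that the Poincar\'e metric and a continuous metric have bounded difference on the whole of $S(\bb{C})$. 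This is precisely the paper's Step 3, and it is not automatic: the Poincar\'e metric is only known to extend continuously outside a codimension-$2$ set, and a priori the singularity along the boundary is a homogeneous degree-$1$ rational function in $\log|z_i|$ (by \cite{Ignacio-Burgos-Gil2015The-singulariti}) which need not be linear. The alignment hypothesis is used a second time here, to force that rational function to be linear, and only then does \ref{rem:bounded_differences} give the bounded difference of heights. In the function-field case the issue disappears, but for $K=\bb{Q}$ omitting this step leaves a real hole.

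The second gap is in the ample-positivity of $\ca{L}$, and here your approach genuinely diverges from the paper's. You propose Bertini plus the one-dimensional Silverman--Tate result on a general complete-intersection curve $T$, whereas the paper first reduces (Step 1) to the case where $\sigma$ does not lie in the $L/K$-trace of $J$ and then quotes the Lang--N\'eron theorem to get strict positivity of $c_1(\ca{L})\cdot c_1(H)^{\dim - 1}$. Your route would need (a) to justify that $\sigma|_T$ has infinite order, which is a countable intersection of open conditions in the parameter space and needs a noetherianity or spreading-out argument rather than bare genericity, and (b) to exclude the possibility that $\sigma|_T$ lies in the $K(T)/K$-trace, in which case the geometric N\'eron--Tate height over $K(T)$ vanishes and the degree is only $\ge 0$. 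You flag both worries at the end but do not resolve them; the paper sidesteps them by handling the trace case head-on before invoking positive definiteness of the Lang--N\'eron height.
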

Examples of varieties for which the second condition of the theorem holds include curves and varieties $X$ with $\dim_\bb{Q}\on{Pic}(X) \otimes_\bb{Z} \bb{Q} = 1$. 

\begin{proof}\leavevmode
\begin{enumerate}
\item[Step 1:]preliminary reductions. 

We may assume $S$ is integral; write $L$ for its field of rational functions. Suppose first that the section $\sigma$ lies in the image of the $L/K$ trace of $J_K$ (cf. \cite{Conrad2006Chows-K/k-image}). Then after shrinking $S$ we may assume there is an abelian variety $B/K$ and a map $B_L \ra J_L$ such that $\sigma$ is the base-change to $L$ of some section $\tau \in B(K)$. The section $\tau$ cannot have finite order since anything which kills $\tau$ also kills $\sigma$, and if $\tau$ is of infinite order then the result is clear since the set of points of sufficiently small height is empty. We may thus assume that $\sigma$ does \emph{not} lie in the image of the $L/K$ trace of $J_K$. 

We have already assumed that $\sigma$ corresponds to a divisor supported on some sections of $C/S$ but we may further assume that these sections extend $C/\ca{S}$ and (for example by looking at case I in the proof of theorem 2.4 of \cite{Knudsen1983The-projectivitii}) are contained in the smooth locus of $C/\ca{S}$. This can be achieved by modifications which do not affect the fibre over $K$, since the sections already necessarily go through the smooth locus over $K$ by our assumption that $C_{\ca{S}_K}$ be regular. 

\item[Step 2:] checking the admissible pairing has enough global sections. 

Define a line bundle on $\ca{S}$ by $\ca{L} = \Span{\sigma, \sigma}_a$, the admissible pairing as defined in \ref{sec:defining_height_jump}. We want to show that $\on{h}^0(\ca{S}_K, \ca{L}^{\otimes n}) \ge 2$ for some $n >0$. By \ref{lem:very_effective_implies_weakly_non_degen} it is enough to show that $\ca{L}$ is ample-positive, which we will deduce from the proof of the Lang-N\'eron theorem (of which a modern exposition can be found in \cite{Conrad2006Chows-K/k-image}). Let $H$ be any ample line bundle on $\ca{S}_K$, then the pair $(\ca{S}_K, H)$ gives rise to a \emph{generalised global field structure} on $L$, cf. [loc. cit., example 8.4].  After quite some work unravelling the definitions, we find that the N\'eron-Tate height 
\begin{equation*}
\hat{\on{h}}_H\colon J(\bar{L})/\on{Tr}(J/\bar{K}) \ra \bb{R}
\end{equation*}
differs by a bounded amount from the function 
\begin{equation*}
J(\bar{L})/\on{Tr}(J/\bar{K}) \ra \bb{R};\;\; \tau \mapsto c_1(\ca{L})\cdot c_1(H)^{\on{dim}\ca{S}_K - 1}. 
\end{equation*}
By [loc. cit. theorem 9.15] the function $\hat{\on{h}}_H$ is positive definite, so (after possibly replacing $\sigma$ by a positive multiple, which is harmless for the argument) we find that $c_1(\ca{L})\cdot c_1(H)^{\on{dim}\ca{S}_K - 1} >0$ as required. 

\item[Step 3:] comparing metrics on $\ca{L}$. 

Since $\ca{S}_K$ is projective there exists a continuous metric on the line bundle $\ca{L}$ (eg. write $\ca{L}$ as a difference of very ample line bundles, to which we can pull back the Fubini-Study metric). This metric is far from unique, but (by compactness) any two such metrics have bounded difference. Write $\aabs{-}_c$ for one such metric. The bundle $\ca{L}$ also comes with a natural metric $\aabs{-}_\ca{P}$ over $S$ given by pulling back the unique rigidified translation-invariant metric on the Poincar\'e bundle. We do not know\footnote{The metric does extend continuously if $\on{dim}\ca{S}_K = 1$ by \cite{Holmes2013Asymptotics-of-}, it does not extend continuously in general in higher dimension (due to height-jumping), and in the present case (when we have a N\'eron model) we do not know whether it extends. } if this metric extends continuously to the whole of $\ca{S}_K$, but nonetheless we will see in the next paragraph how to use results from \cite{Ignacio-Burgos-Gil2015The-singulariti} to show that $\aabs{-}_\ca{P}$ has bounded difference from $\aabs{-}_c$. 

 By \cite[theorem 1.1 (1)]{Ignacio-Burgos-Gil2015The-singulariti} the logarithm of the norm in $\aabs{-}_\ca{P}$ of a local generating section of $\ca{L}$ differs by a bounded amount from a function of the form
\begin{equation*}
q(\log(\abs{z_1})\ldots, \log(\abs{z_n}))
\end{equation*}
where $z_1, \ldots, z_n$ are suitably chosen local analytic coordinates, and $q$ is some homogeneous rational function of degree 1. However, using that $C_{\ca{S}_K}/\ca{S}_K$ is aligned one can check (using test curves in $\ca{S}_K$) that this rational function $q$ is in fact linear. Thus we see that $\aabs{-}_\ca{P}$ and $\aabs{-}_c$ have bounded difference. 

 Associated to the metrised line bundles $(\ca{L}, \aabs{-}_c)$ and $(\ca{L}, \aabs{-}_\ca{P})$ we get height functions $\on{h}^c_\ca{L}$ and $\on{h}^\ca{P}_\ca{L}\colon S(\bar{K}) \ra \bb{R}$ respectively. Then $\on{h}^c_{\ca{L}}$ is weakly non-degenerate by \ref{lem:very_effective_implies_weakly_non_degen} since $\on{h}^0(\ca{S}_K, \ca{L}^{\otimes n}) \ge 2$. Because $\on{h}^c_\ca{L}$ and $\on{h}^\ca{P}_\ca{L}$ have bounded difference (cf. \ref{rem:bounded_differences}) we deduce that $\on{h}^\ca{P}_\ca{L}$ is also weakly non-degenerate. 

\item[Step 4:] concluding the proof using that the jump is bounded. 

To conclude the proof of the theorem, we compare three functions from $S(\bar{K})$ to $\bb{R}$:
\begin{enumerate}
\item $\hat{\on{h}}_\sigma$ sending $s \in S(\bar{K})$ to the N\'eron-Tate height of $\sigma(s)$; 
\item $j \colon S(\bar{K})\ra\bb{R}$ sending $s \in S(\bar{K})$ to $\frac{\on{deg}j(\sigma, \sigma, \bar{s})}{[\kappa(s):K]}$ where $j(\sigma, \sigma, \bar{s})$ is the jump corresponding to $\sigma$ and $\bar{s}$;
\item the height $\on{h}^\ca{P}_{\ca{L}}$, 
\end{enumerate}
and by construction these satisfy 
\begin{equation*}
\hat{\on{h}}_\sigma = \on{h}^\ca{P}_{\ca{L}} - j. 
\end{equation*}
We have seen above that $\on{h}^\ca{P}_{\ca{L}}$ is weakly non-degenerate, and $\abs{j}$ is bounded by \ref{thm:generic_alignment_bounds_jump}, so we see that $\hat{\on{h}}_\sigma$ is also weakly non-degenerate as required. 
\end{enumerate}
\end{proof}

Note that one of the main results of \cite{David-Holmes2014Neron-models-an} is the nonnegativity of $j$ (as conjectured in \cite{Hain2013Normal-function}), but for our purposes this does not seem very helpful, since we want to deduce positivity of $\hat{\on{h}}_\sigma$ from positivity of $\on{h}_\ca{L}$. In the presence of a N\'eron model on the generic fibre we can show that $j$ is bounded (which is sufficient), but without this assumption it seems hard to control $j$. 

It is interesting to compare our result to \cite[theorem 1.3.5]{Zhang2010Gross-Schoen-cy} where Zhang proves a similar result where the section $\sigma$ of the jacobian is replaced by the Gross-Schoen cycle on the product of $C$ with itself, and where the family of curves $C/S$ is assumed to be smooth. It seems reasonable to speculate that it might be possible to generalise Zhang's result to the case where $C/S$ is assumed only to be regular and aligned, rather than smooth.

\bibliographystyle{alpha} 
\bibliography{../../prebib.bib}

\def\cprime{$'$}
\begin{thebibliography}{BGHdJ16}

\bibitem[BGHdJ16]{Ignacio-Burgos-Gil2015The-singulariti}
{Jos\'e} Burgos~Gil, David Holmes, and Robin de~Jong.
\newblock Singularity of the biextension metric for families of abelian
  varieties.
\newblock {\em \url{https://arxiv.org/abs/1604.00686}}, 2016.

\bibitem[BHdJ14]{David-Holmes2014Neron-models-an}
Owen Biesel, David Holmes, and Robin de~Jong.
\newblock N{\'e}ron models and the height jump divisor.
\newblock {\em \url{http://arxiv.org/abs/1412.8207}}, 2014.

\bibitem[Cal86]{Call1986Local-heights-o}
Gregory~Scott Call.
\newblock {\em Local heights on families of abelian varieties}.
\newblock PhD thesis, Harvard University, 1986.

\bibitem[Con06]{Conrad2006Chows-K/k-image}
Brian Conrad.
\newblock Chow's {$K/k$}-image and {$K/k$}-trace, and the {L}ang-{N}{\'e}ron
  theorem.
\newblock {\em Enseign. Math. (2)}, 52(1-2):37--108, 2006.

\bibitem[CT13]{Cadoret2013Note-on-torsion}
Anna Cadoret and Akio Tamagawa.
\newblock Note on torsion conjecture.
\newblock In {\em Geometric and differential {G}alois theories}, volume~27 of
  {\em S{\'e}min. Congr.}, pages 57--68. Soc. Math. France, Paris, 2013.

\bibitem[dJ96]{Jong1996Smoothness-semi}
Aise~Johan de~Jong.
\newblock Smoothness, semi-stability and alterations.
\newblock {\em Inst. Hautes {\'E}tudes Sci. Publ. Math.}, (83):51--93, 1996.

\bibitem[Gre89]{Green1989Heights-in-fami}
William Green.
\newblock Heights in families of abelian varieties.
\newblock {\em Duke Mathematical Journal}, 58(3):617--632, 1989.

\bibitem[Hai13]{Hain2013Normal-function}
Richard Hain.
\newblock Normal functions and the geometry of moduli spaces of curves.
\newblock In G.~Farkas and I.~Morrison, editors, {\em Handbook of Moduli,
  Volume I. Advanced Lectures in Mathematics, Volume XXIV}. International
  Press, Boston, 2013.

\bibitem[HdJ13]{Holmes2013Asymptotics-of-}
David Holmes and Robin de~Jong.
\newblock Asymptotics of the {N}{\'e}ron height pairing.
\newblock {\em To appear in Math. Res. Lett.},
  \url{http://arxiv.org/abs/1304.4768}, 2013.

\bibitem[Hol14a]{Holmes2014A-Neron-model-o}
David Holmes.
\newblock {A N{\'e}ron model of the universal jacobian}.
\newblock {\em \url{http://arxiv.org/abs/1412.2243}}, 2014.

\bibitem[Hol14b]{Holmes2014Neron-models-an}
David Holmes.
\newblock {N{\'e}ron models of jacobians over base schemes of dimension greater
  than 1}.
\newblock {\em \url{http://arxiv.org/abs/1402.0647}, to appear in J. Reine
  Angew. Math.}, 2014.

\bibitem[Hol16]{Holmes2016Quasi-compactne}
David Holmes.
\newblock {Quasi-compactness of N{\'e}ron models, and an application to torsion
  points}.
\newblock {\em \url{http://arxiv.org/abs/1604.01155}}, 2016.

\bibitem[HS88]{Hindry1988The-canonical-h}
Marc Hindry and Joseph~H. Silverman.
\newblock {The canonical height and integral points on elliptic curves}.
\newblock {\em Inventiones mathematicae}, 93(2):419--450, 1988.

\bibitem[HS00]{Hindry2000Diophantine-geo}
Marc Hindry and Joseph~H. Silverman.
\newblock {\em Diophantine geometry: an introduction}, volume 201.
\newblock Springer, 2000.

\bibitem[Knu83]{Knudsen1983The-projectivitii}
Finn~F. Knudsen.
\newblock The projectivity of the moduli space of stable curves. {II}. {T}he
  stacks {$M_{g,n}$}.
\newblock {\em Math. Scand.}, 52(2):161--199, 1983.

\bibitem[Lan78]{Lang1978Elliptic-curves}
Serge Lang.
\newblock {\em Elliptic curves: Diophantine analysis}, volume 231.
\newblock Springer, 1978.

\bibitem[Lan83]{Lang1983Fundamentals-of}
Serge Lang.
\newblock {\em Fundamentals of {D}iophantine geometry}.
\newblock Springer-Verlag, New York, 1983.

\bibitem[Lan88]{Lang1988Introduction-to}
Serge Lang.
\newblock {\em {Introduction to Arakelov theory}}.
\newblock Springer, 1988.

\bibitem[Lea90]{Lear1990Extensions-of-n}
Dale~Allen Lear.
\newblock {\em Extensions of normal functions and asymptotics of the height
  pairing}.
\newblock ProQuest LLC, Ann Arbor, MI, 1990.
\newblock Ph.D. Thesis --University of Washington.

\bibitem[Liu02]{Liu2002Algebraic-geome}
Qing Liu.
\newblock {\em Algebraic geometry and arithmetic curves}, volume~6 of {\em
  Oxford Graduate Texts in Mathematics}.
\newblock Oxford University Press, Oxford, 2002.
\newblock Translated from the French by Reinie Ern{\'e}, Oxford Science
  Publications.

\bibitem[LMB00]{Laumon2000Champs-algebriq}
G{\'e}rard Laumon and Laurent Moret-Bailly.
\newblock {Champs alg{\'e}briques, volume 39 of Ergebnisse der Mathematik und
  ihrer Grenzgebiete. 3. Folge. A Series of Modern Surveys in Mathematics
  [Results in Mathematics and Related Areas. 3rd Series. A Series of Modern
  Surveys in Mathematics]}, 2000.

\bibitem[Maz77]{Mazur1977Modular-curves-}
Barry Mazur.
\newblock Modular curves and the {E}isenstein ideal.
\newblock {\em Inst. Hautes {\'E}tudes Sci. Publ. Math.}, (47):33--186 (1978),
  1977.

\bibitem[Maz78]{Mazur1978Rational-isogen}
Barry Mazur.
\newblock Rational isogenies of prime degree (with an appendix by {D}.
  {G}oldfeld).
\newblock {\em Invent. Math.}, 44(2):129--162, 1978.

\bibitem[MB85a]{Moret-Bailly1985Metriques-permi}
Laurent Moret-Bailly.
\newblock M{\'e}triques permises.
\newblock {\em Ast{\'e}risque}, 127:29--87, 1985.

\bibitem[MB85b]{Moret-Bailly1985Pinceaux-de-var}
Laurent Moret-Bailly.
\newblock {\em Pinceaux de vari{\'e}t{\'e}s ab{\'e}liennes}, volume 129.
\newblock Soci{\'e}t{\'e} math{\'e}matique de France, 1985.

\bibitem[Poo07]{Poonen2007Gonality-of-mod}
Bjorn Poonen.
\newblock Gonality of modular curves in characteristic {$p$}.
\newblock {\em Math. Res. Lett.}, 14(4):691--701, 2007.

\bibitem[Sil83]{Silverman1983Heights-and-the}
Joseph~H. Silverman.
\newblock Heights and the specialization map for families of abelian varieties.
\newblock {\em J. Reine Angew. Math.}, 342:197--211, 1983.

\bibitem[Sil01]{Silverberg2001Open-questions-}
Alice Silverberg.
\newblock Open questions in arithmetic algebraic geometry.
\newblock In {\em Arithmetic algebraic geometry ({P}ark {C}ity, {UT}, 1999)},
  volume~9 of {\em IAS/Park City Math. Ser.}, pages 83--142. Amer. Math. Soc.,
  Providence, RI, 2001.

\bibitem[Szp85]{Szpiro1985Degres-intersec}
Lucien Szpiro.
\newblock Degr\'es, intersections, hauteurs.
\newblock {\em Ast\'erisque}, (127):11--28, 1985.
\newblock Seminar on arithmetic bundles: the Mordell conjecture (Paris,
  1983/84).

\bibitem[Tat83]{Tate1983Variation-of-th}
John Tate.
\newblock Variation of the canonical height of a point depending on a
  parameter.
\newblock {\em Amer. J. Math.}, 105(1):287--294, 1983.

\bibitem[Zha10]{Zhang2010Gross-Schoen-cy}
Shou-Wu Zhang.
\newblock Gross-{S}choen cycles and dualising sheaves.
\newblock {\em Invent. Math.}, 179(1):1--73, 2010.

\end{thebibliography}

\end{document}